\documentclass[11pt,reqno]{amsart}
\usepackage{tikz,a4wide}
\usetikzlibrary{arrows.meta}
\usetikzlibrary{math,calc}
\usepackage{float}
\usepackage{amsmath}
\usepackage{xfrac}
\usepackage{stmaryrd}
\SetSymbolFont{stmry}{bold}{U}{stmry}{m}{n}
\usepackage{mathrsfs,bm,amsthm,mathtools,yfonts,amssymb,color}
\usepackage{esint}
\usepackage{xcolor}
\usepackage[pagebackref,citecolor=black,linkcolor=blue,colorlinks=true]{hyperref}
\usepackage{tikz-3dplot}
\usepackage{epstopdf}
\usepackage{cleveref}
\usepackage{csquotes} 
\MakeOuterQuote{"} 

\makeatletter 
\DeclarePairedDelimiter{\@tmpabs}{\lvert}{\rvert}
\newcommand{\@absstar}[1]{{\@tmpabs*{#1}}}
\newcommand{\@absnostar}[2][]{{\@tmpabs[#1]{#2}}}
\newcommand{\abs}{\@ifstar\@absstar\@absnostar}
\makeatother

\makeatletter 
\DeclarePairedDelimiter{\@tmpnorm}{\lVert}{\rVert}
\newcommand{\@normstar}[1]{{\@tmpnorm*{#1}}}
\newcommand{\@normnostar}[2][]{{\@tmpnorm[#1]{#2}}}
\newcommand{\norm}{\@ifstar\@normstar\@normnostar}
\makeatother

\usepackage{cite}

\begingroup
\newtheorem{theorem}{Theorem}[section]
\newtheorem{lemma}[theorem]{Lemma}
\newtheorem{proposition}[theorem]{Proposition}

\endgroup

\theoremstyle{definition}
\newtheorem{definition}[theorem]{Definition}
\newtheorem{remark}[theorem]{Remark}

\newtheorem{conjecture}[theorem]{Conjecture}

\numberwithin{equation}{section}

\title[Thresholds for anomalous dissipation]
{Regularity thresholds for anomalous dissipation and related phenomena in passive scalars}

\author[M. Bagnara]{Marco Bagnara}
\address{Department of Mathematics, Imperial College London, London, SW7 2AZ, UK}
\email{m.bagnara@imperial.ac.uk}
\author[D.W. Boutros]{Daniel W. Boutros}
\address{Department of Applied Mathematics and Theoretical Physics, University of Cambridge, Cambridge CB3 0WA, UK}
\email{dwb42@cam.ac.uk}
\author[C. De Lellis]{Camillo De Lellis}
\address{School of Mathematics, Institute for Advanced Study, 1 Einstein Dr., Princeton NJ 08540, USA}
\address{Gran Sasso Science Institute, viale Francesco Crispi, 7, 67100 L’Aquila, Italy} 
\email{camillo.delellis@ias.edu}
\author[S. Mayboroda]{Svitlana Mayboroda}
\address{Department of Mathematics, ETH Z\"urich, R\"amistrasse 101, 8092 Z\"urich, Switzerland}
\address{School of Mathematics, Institute for Advanced Study, 1 Einstein Dr., Princeton NJ 08540, USA}
\email{svitlana.mayboroda@math.ethz.ch}

\date{July 16, 2026}

\keywords{Anomalous dissipation, transport equation, Onsager's conjecture, passive scalar transport, Sard property, renormalized solution}

\subjclass[2020]{35Q49 (primary), 28A75, 35B33, 35D30, 35Q35, 47B80, 60H25, 76F25 (secondary)}

\allowdisplaybreaks

\begin{document}

\begin{abstract}
We prove the absence of anomalous dissipation for passive scalars driven by some random autonomous divergence-free vector fields in $\mathbb T^d$. In dimension $d=2$ we just need continuity almost surely and a mild nondegeneracy condition on the randomness. In dimension $d\geq 3$ we assume a special geometric structure and almost sure H\"older regularity with a H\"older exponent bigger than $\frac{1}{8}$. No regularity is assumed on the passive scalar except for boundedness in the initial data. The proof relies on dimension-theoretic arguments, as opposed to commutator estimates. A consequence of these results is that the same assumptions prevent (almost surely) many other expected properties of turbulent flows, such as anomalous regularization, the Yaglom-Obukhov-Corrsin law, and Richardson diffusion.
\end{abstract}

\maketitle

\tableofcontents

\section{Introduction}

The phenomenon of anomalous dissipation, which is the nonvanishing of the viscous energy dissipation in the vanishing viscosity limit, occupies a central position in the mathematical theory of turbulence. It is therefore sometimes referred to as the zeroth law of turbulence and it is expected to hold for (turbulent) solutions of the classical Navier-Stokes equations as the viscosity converges to zero (or equivalently as the Reynolds number becomes arbitrarily large). 
A closely related question arises for passive scalar transport. Consider a scalar $\theta_\varepsilon : \mathbb{T}^d \times [0,T] \rightarrow \mathbb{R}$ which solves the advection--diffusion equation
\begin{equation}\label{eq:advection_diffusion}
\partial_t \theta_\varepsilon + (v\cdot\nabla)\theta_\varepsilon
= \varepsilon \Delta \theta_\varepsilon,
\end{equation}
driven by a divergence-free autonomous velocity field $v : \mathbb{T}^d \times [0,T]\rightarrow \mathbb{R}^d$ ($\mathbb T^d$ denotes the $d$-dimensional flat torus). It is expected by physicists and confirmed by numerical studies that in a turbulent vector field the scalar variance dissipates at a non-vanishing rate as $\varepsilon \to 0$ \cite{ShraimanSiggia2000,DonzisSreenivasanYeung2005,Sreenivasan2019, Warhaft2000}. This motivates the following definition.

\begin{definition}\label{d:anomalous-dissipation}
Consider a bounded divergence-free vector field $v : \mathbb{T}^d \times [0,1] \rightarrow \mathbb{R}^d$. We say that $v$ allows a dissipation anomaly if there is a bounded initial data $\theta_{in} \in L^\infty (\mathbb{T}^d)$ with the following property. If $\theta_\varepsilon$ denotes, for any $\varepsilon>0$, the unique bounded solution of the advection-diffusion equation
\begin{align}\label{e:parabolic}
\left\{
\begin{array}{ll}
\partial_t \theta_\varepsilon + (v\cdot \nabla) \theta_\varepsilon = \varepsilon \Delta \theta_\varepsilon\\ \\
\theta_\varepsilon (\cdot, 0) = \theta_{in}
\end{array}
\right.\, ,
\end{align}
then the sequence $\{ \theta_\varepsilon \}$ satisfies
\[
\limsup_{\varepsilon \downarrow 0}\,  \varepsilon \int_0^T \int |\nabla \theta_\varepsilon|^2 (x,t)\, dx \, dt > 0\, .
\]
\end{definition}

Moreover, in the physics literature, beginning with the works of Obukhov, Corrsin, and Yaglom \cite{Obukhov1949,Corrsin1951,Yaglom1949} in 1949--1951, dimensional arguments analogous to the Kolmogorov's 1941 theory of turbulence predict nontrivial flux relations for passive scalars in turbulent regimes. In particular, heuristic cascade theories suggest a relation of the form $\alpha + 2\beta = 1$,
linking the H\"older regularity $C^\alpha (\mathbb{T}^d)$ of the velocity field to an effective regularity $C^\beta (\mathbb{T}^d)$ of the scalar. Such predictions are sometimes referred to as manifestations of anomalous regularization, whereby turbulent mixing produces effective smoothing properties that persist in the vanishing diffusivity limit. We will discuss this phenomenon in more detail in the Sections \ref{s:Yaglom} and \ref{s:Yaglom-2}. A third related phenomenon is that of Richardson's dispersion, roughly speaking the assertion that fluid particles get away from each other at a uniform rate independent of their initial position and of the viscosity, see Section \ref{s:no-richardson} for the precise definition.

\medskip

The present work focuses on the case of autonomous vector fields. Under the latter assumption and in two space dimensions we show that all of these phenomena are absent for random continuous vector fields. In $3$ or more space dimensions we are able to draw the same conclusion if we impose some special structure and assume H\"older regularity for the random field above the H\"older exponent $\frac{1}{8}$. The latter looks like a technical point and it seems likely that the special structure assumed in our theorems is an obstruction also at the level of random continuous fields in higher dimensions. 

In two space dimensions we consider probability measures on divergence-free autonomous periodic continuous vector fields satisfying a mild nondegeneracy condition, ensured for instance by the absolute continuity of the pointwise distribution.
Our main theorem shows that, under these assumptions, the associated transport equation possesses the DiPerna--Lions renormalization property almost surely. The latter property, whose definition dates back to the seminal work of DiPerna and Lions in \cite{DiPernaLions1989} and is stated precisely in Definition \ref{d:anomalous-dissipation}, seems to obstruct all the expected features of a turbulent flow. This is certainly clear to experts, but since it does not seem to be explicitly stated in a coherent way somewhere, we decided to collect precise statements and give precise proofs in the final part of this paper, without claiming any originality. 

Let us now rigorously state the main results.

\begin{theorem}[Two-dimensional case]\label{t:no-anomaly-2}
Consider a probability measure $\mathbb P$ on the space of continuous divergence-free autonomous vector fields $v$ on $\mathbb T^2$. Assume that 
\[
\mathbb P (\{v: v(x) =0\})=0 \qquad \mbox{for a.e. $x\in \mathbb T^2$.}
\] 
Then $\mathbb{P}$-almost surely $v$ satisfies the DiPerna-Lions renormalization property (cf. Definition \ref{d:renormalization}) and thus does not allow dissipation anomalies (cf. Theorem \ref{p:no-dissipation}), does not support Richardson dispersion (cf. Proposition \ref{p:no-Richardson}), does not display anomalous regularization (cf. Proposition \ref{p:no-regularization}), and violates Yaglom's law (under the interpretation of Theorem \ref{t:Yaglom-violated}). 
\end{theorem}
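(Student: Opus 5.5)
The core claim is that $\mathbb P$-almost every $v$ has the DiPerna--Lions renormalization property. The other conclusions (no dissipation anomaly, no Richardson dispersion, no anomalous regularization, failure of Yaglom's law) would then follow from the results quoted in the statement, which take renormalization as their hypothesis. I will therefore only plan the renormalization part.

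In two dimensions an autonomous divergence-free continuous field has a stream function: $v=\nabla^\perp H$ with $H\in C^1(\mathbb T^2)$, possibly after removing a constant mean drift, and a constant drift is harmless by Galilean change of variables. The natural tool is the Alberti--Bianchini--Crippa theory for bounded autonomous fields on the plane. It says that uniqueness, and renormalization, for $\partial_t\theta+\nabla^\perp H\cdot\nabla\theta=0$ with bounded $\theta$ holds if and only if $H$ has the \emph{weak Sard property}. That property asks that the push-forward under $H$ of Lebesgue measure restricted to $S\cap\{\nabla H=0\}$ be singular with respect to Lebesgue measure on $\mathbb R$, where $S$ is the union of the connected components of level sets of $H$ with positive length. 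The local theory lives on $\mathbb R^2$ and needs to be localized or adapted to the torus. I take that adaptation as routine.

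The key step is a stronger and cleaner statement: almost surely the critical set $E_v:=\{x: v(x)=0\}$ has Lebesgue measure zero. This is where the hypothesis is used, through Fubini on the product $\Omega\times\mathbb T^2$, with joint measurability coming from continuity of $v$. We have
\[
\mathbb E\bigl[\,|E_v|\,\bigr]=\int_{\mathbb T^2}\mathbb P(\{v:v(x)=0\})\,dx=0,
\]
so $|E_v|=0$ for $\mathbb P$-a.e. $v$. When $|\{\nabla H=0\}|=0$, the measure being pushed forward is zero, so the weak Sard property holds trivially. The ABC theorem then gives uniqueness of bounded weak solutions for both the transport equation and its time reversal. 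Renormalization follows by the standard argument: $\beta(\theta)$ is also obtained as a limit of Lagrangian solutions, or one uses the ABC characterization that solutions are constant along the connected components of level sets of $H$.

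The main obstacle, I expect, is verifying that the version of the ABC result we invoke truly gives the renormalization property in the precise sense of Definition \ref{d:renormalization}, for merely continuous (hence bounded) fields, on $\mathbb T^2$ rather than $\mathbb R^2$, and with time-dependent bounded solutions. ABC prove uniqueness for bounded, compactly supported, autonomous fields with Lipschitz potential, which fits our setting. My first attempt would be to lift to $\mathbb R^2$ by periodic extension and apply the theorem locally via cutoffs, using that renormalization is local. If the mean drift $c=\int v$ is nonzero, the stream function is $H(x)+c^\perp\cdot x$, which is nonperiodic but still Lipschitz, and the weak Sard argument above is unaffected because $\{\nabla H=0\}$ remains the null set $E_v$. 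A secondary technical point is measurability of $v\mapsto |E_v|$, which follows from the joint Borel measurability of $(v,x)\mapsto v(x)$ on $C\times\mathbb T^2$.
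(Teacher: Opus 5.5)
Your proposal follows the paper's proof. Fubini gives $\mathbb E[\mathcal L^2(\{v=0\})]=\int_{\mathbb T^2}\mathbb P(\{v:v(x)=0\})\,dx=0$, so the critical set of the stream function is a.s. null and the weak Sard property holds trivially; Alberti--Bianchini--Crippa then yields renormalization, and the nonzero-mean case is handled by your localization option, which matches the paper's local stream functions on simply connected sets plus a partition of unity.

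Drop the suggestion that a constant drift can be removed by a Galilean change of variables. That change turns $v$ into the time-dependent field $\nabla^\perp H(y+ct)$, which the autonomous ABC theory does not cover, so only the localization route actually works.
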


The main statement, namely that $v$ satisfies the DiPerna-Lions property, follows with a two lines proof: it suffices to apply Fubini's theorem to reduce it to a powerful deterministic result of Alberti, Bianchini, and Crippa, cf. \cite{AlbertiBianchiniCrippa2014}. In other words, it is rather trivial to see that, even under very mild assumptions, the stream function of a random continuous divergence-free vector field satisfies the ``weak Sard property'', cf. Definition \ref{d:weak-Sard}. The latter was introduced in \cite{AlbertiBianchiniCrippa2014} for the more general case of Lipschitz vector field, and the authors showed in fact that it is equivalent to the renormalization property and to the uniqueness of a regular Lagrangian flow in the sense of DiPerna and Lions.  In particular it is tempting to advance the following conjecture at the deterministic level:

\begin{conjecture}
An autonomous average-free and divergence-free vector field $v\in C^0 (\mathbb T^2, \mathbb R^2)$ allows dissipation anomalies if and only if its stream function violates the weak Sard property of Definition \ref{d:weak-Sard}.
\end{conjecture}

A good reason to believe in the above conjecture is given by the paper \cite{JohanssonSorella2024}. In the latter the authors, inspired by the works relating spontaneous stochasticity and anomalous dissipation (cf. \cite{Drivas}), construct explicit autonomous $2$-dimensional vector fields which allow dissipation anomalies, but the mechanism behind their construction is precisely the nonuniqueness of regular Lagrangian flows.

\medskip

Following the same blueprint, we can prove some extension of Theorem \ref{t:no-anomaly-2} to higher dimensions, although not quite as clean, see Theorem \ref{t:no-anomaly-general}. Here we will limit ourselves to the case of three dimensions because it is easier to state, physically more relevant, and moreover relates the structure of the random vector field to the classical ``Clebsch variables'' of the fluid-dynamics literature. 

\begin{theorem}[Three-dimensional case]\label{t:no-anomaly-3}
Consider a probability measure $\mathbb P$ on the space of continuous $C^1$ maps $\phi = (\phi_1, \phi_2): \mathbb T^3 \to \mathbb R^2$ such that:
\begin{itemize}
\item[(a)] There is $\alpha > \frac{1}{8}$ such that $\phi \in C^{1,\alpha} (\mathbb{T}^3 ; \mathbb{R}^2)$ $\mathbb{P}$-almost surely;
\item[(b)] There is a constant $C>0$ such that
\[
\mathbb P (\{\phi: |D\phi (x) - M_0|\leq r\}) \leq C r^6 \qquad \forall r>0, \quad \forall x\in \mathbb T^3\, ,\quad\mbox{and} \quad \forall M_0 \in \mathbb R^{2\times 3}\, .
\]
\end{itemize}
Then $\mathbb{P}$-almost surely the vector field $v = \nabla \phi_1 \times \nabla \phi_2$ satisfies the DiPerna-Lions renormalization property and hence all the conclusions of Theorem \ref{t:no-anomaly-2} apply. 
\end{theorem}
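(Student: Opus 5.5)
We reduce, as in the two-dimensional case, to a deterministic criterion for Clebsch-type fields $v=\nabla\phi_1\times\nabla\phi_2$. For such $v$ both $\phi_1$ and $\phi_2$ are conserved along trajectories, since $v\cdot\nabla\phi_i=0$. Hence $v$ is tangent to the level sets $\{\phi=c\}$, which away from the critical set $S=\{x: \operatorname{rank} D\phi(x)<2\}$ are $C^{1,\alpha}$ curves. We plan to use the higher-dimensional analogue of the Alberti--Bianchini--Crippa theory, which is presumably the deterministic input behind Theorem \ref{t:no-anomaly-general}. Along the fibres $\{\phi=c\}$ the field $v$ has the form $(\mathrm{Jac}\,\phi)\,\tau$, with $\tau$ the unit tangent, so the problem disintegrates via the coarea formula into one-dimensional problems on the fibres. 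On each regular fibre uniqueness and renormalization are automatic, exactly as in the $2$d argument. Renormalization therefore holds as soon as the critical set $S$ is negligible for the disintegration. The natural sufficient condition is a weak Sard-type property: the push-forward under $\phi$ of $\mathcal L^3\llcorner(S\cap\{v=0\})$ is singular with respect to $\mathcal L^2$, or more crudely $\mathcal L^2(\phi(S))=0$. So the first step is to state and use this deterministic criterion for $\phi\in C^1$, citing the general theorem.

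The probabilistic step is to show that $\mathcal L^2(\phi(S))=0$ almost surely. Note that $S$ is exactly the set where $D\phi$ lies in the variety $\Sigma$ of rank $\le1$ matrices in $\mathbb R^{2\times3}$. This variety has dimension $4$, hence codimension $2$. Cover $\mathbb T^3$ by about $r^{-3}$ cubes $Q$ of side $r$. By (a), on each cube $D\phi$ oscillates by at most $Cr^\alpha$, with a random constant that is almost surely finite; we localize to the event $\{\|\phi\|_{C^{1,\alpha}}\le K\}$. A cube $Q$ meets $S$ only if $\operatorname{dist}(D\phi(x_Q),\Sigma)\lesssim r^\alpha$. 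We cover the $r^\alpha$-neighbourhood of $\Sigma\cap B_K$ by about $r^{-4\alpha}$ balls of radius $r^\alpha$ centred at matrices $M_0$. Assumption (b) then gives
\[
\mathbb P(Q\cap S\neq\emptyset)\lesssim r^{-4\alpha}\,r^{6\alpha}=r^{2\alpha}.
\]

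On the other hand, when $Q$ meets $S$ at some point $x_0$, we expand
\[
\phi(x)=\phi(x_0)+D\phi(x_0)(x-x_0)+O(r^{1+\alpha}).
\]
Since $D\phi(x_0)$ has rank at most $1$ and norm at most $K$, the image $\phi(Q)$ lies in a rectangle of sides $Kr$ and $r^{1+\alpha}$, so its area is at most about $r^{2+\alpha}$. Summing over cubes gives
\[
\mathbb E\,\mathcal L^2(\phi(S))\lesssim r^{-3}\cdot r^{2\alpha}\cdot r^{2+\alpha}=r^{3\alpha-1}.
\]
This crude count only tends to zero when $\alpha>1/3$. To reach the threshold $1/8$ we must exploit the finer structure and refine the estimate. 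We expect this refinement to be the \textbf{main obstacle}.

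Our first attempt is a multiscale, iterated covering. We stratify $S$ by the rank of $D\phi$: the set $\{D\phi=0\}$ has codimension $6$, so by (b) its probability per cube is at most $r^{6\alpha}$, and its image has area at most $r^{2+2\alpha}$. For the rank-one stratum we use that the image is essentially one-dimensional. We then replace the measure condition by the weaker one in the criterion: it is enough that $\phi_\#(\mathcal L^3\llcorner S)$, or the relevant measure on the fibres, is singular. This lets us bound $\mathcal H^{2}$-contents at a dyadic sequence of scales and use Borel--Cantelli along $r=2^{-k}$, which can gain powers because expectations at successive scales combine.

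The bookkeeping that yields exactly $\alpha>1/8$ would come from balancing three quantities: the codimension $6$ in (b), the dimension $4$ of $\Sigma$, and the Taylor gain $r^{1+\alpha}$ in the normal direction. The balance also needs a finer control of the tangential extent of $\phi(Q)$. If a bound of order $r^{\alpha-1/8}\to0$ emerges from this balance, the almost-sure conclusion follows by Borel--Cantelli. The downstream conclusions then follow from Theorem \ref{p:no-dissipation} and the propositions cited in Theorem \ref{t:no-anomaly-2}.
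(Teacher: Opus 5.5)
There is a genuine gap, and it sits in your deterministic criterion. In dimension three the weak Sard property, i.e.\ $\phi_\sharp(\mathbf 1_S\mathcal L^3)\perp\mathcal L^2$ with $S=\{v=0\}=\{\operatorname{rank}D\phi\le 1\}$, is \emph{not} sufficient for renormalization of $v=\nabla\phi_1\times\nabla\phi_2$. The Alberti--Bianchini--Crippa theory \cite{AlbertiBianchiniCrippa2014} also needs that a.e.\ level set of $\phi$ contains no triods: the reduction to one-dimensional problems on connected components of level sets requires that these components do not branch. Moreover, \cite{ABC-2} shows that this extra condition can fail for $C^{1,\alpha}$ maps satisfying weak Sard. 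Your proposal itself signals the problem. Under (b) the law of $D\phi(x)$ has bounded density, so Fubini gives $\mathcal L^3(S)=0$ almost surely, and weak Sard holds for \emph{every} $\alpha$. If your criterion were right, assumption (a) would be superfluous. The only sufficient condition you actually verify is the crude one, $\mathcal L^2(\phi(S))=0$, which is the classical Morse--Sard property. Your count correctly delivers it only for $\alpha>\frac13$, which is Theorem \ref{t:classical-Morse-Sard} with $j=1$. The ``refinement'' of the rank-one stratum you propose to reach $\frac18$ is unsubstantiated, and it is also not what is needed.

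The missing idea is to split the requirement into weak Sard plus the no-triod property (NT) of Proposition \ref{p:no-triods}. Triods in a.e.\ level set can only come from points where $D\phi$ vanishes. By \cite[Proof of Lemma 2.16]{ABC-2}, for the open set $U=\{D\phi\neq 0\}$, the set of $y$ such that $\phi^{-1}(y)\cap U$ contains a triod is Lebesgue-null, deterministically. Hence rank-one critical points cost nothing beyond $\mathcal L^3(S)=0$. It then suffices to show $\mathcal L^2(\phi(Y))=0$ almost surely for $Y=\{D\phi=0\}$.

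You already wrote down the right ingredients for this stratum. On $\{\|\phi\|_{C^{1,\alpha}}\le K\}$, a cube of side $r$ meets $Y$ with probability $\lesssim K^6 r^{6\alpha}$, by (b) with $M_0=0$. Its image then lies in a disc of radius $\lesssim K r^{1+\alpha}$. Summing over the $r^{-3}$ cubes gives
\[
\mathbb E\big[\mathcal L^2(\phi(Y))\,\mathbf 1_{\{\|\phi\|_{C^{1,\alpha}}\le K\}}\big]\lesssim K^8\, r^{8\alpha-1},
\]
which tends to $0$ exactly when $\alpha>\frac18$.

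The paper reaches the same threshold in two steps. First, $\dim_H Y\le 3-6\alpha$ almost surely (Lemma \ref{l:probabilistic}, via a Borel--Cantelli covering argument). Second, $\dim_H \phi(Y)\le \frac{3-6\alpha}{1+\alpha}<2$ (Lemma \ref{l:deterministic}). Combining $\nabla\phi_i\cdot v=0$, weak Sard and (NT), Sections 6.2--6.3 of \cite{AlbertiBianchiniCrippa2014} then yield the renormalization property.
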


As already mentioned the proofs of Theorems \ref{t:no-anomaly-2} and \ref{t:no-anomaly-3} rely on a probabilistic Morse--Sard mechanism and a striking result of Alberti--Bianchini--Crippa \cite{AlbertiBianchiniCrippa2014} that shows that the renormalization property follows from a geometric condition. A statement of independent interest which we discovered during our investigations is that the classical Morse-Sard property for (suitable) random maps $\phi: \mathbb T^d \to \mathbb R^{d-j}$ holds, almost surely, with much less regularity than for deterministic maps.

\begin{theorem}\label{t:classical-Morse-Sard}
Let $d\geq 2$ and $1\leq j \leq d-1$. Consider a probability measure $\mathbb P$ on $C^0 (\mathbb T^d, \mathbb R^{d-j})$ and assume that 
\begin{itemize}
\item[(a)] $\phi \in C^{1,\alpha} (\mathbb{T}^d; \mathbb{R}^{d-j})$ almost surely for some $\alpha>\frac{j}{2+j}$;
\item[(b)] $\mathbb P (\{\phi : |D\phi (x_0) - M_0| \leq r \}) \leq C_0 r^{d(d-j)}$ for some fixed $C_0$ and all $x_0\in \mathbb T^d$ and $M_0 \in \mathbb R^{d(d-j)}$.
\end{itemize}
Then $\phi$ has almost surely the classical Morse-Sard property, namely $\phi^{-1} (y)$ is a $j$-dimensional $C^1$ submanifold for a.e. $y\in \mathbb R^{d-j}$. 
\end{theorem}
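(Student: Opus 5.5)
Let $S(\phi)=\{x\in\mathbb T^d:\ \mathrm{rank}\, D\phi(x)<d-j\}$ denote the critical set. By the implicit function theorem, for every regular value $y$ the fiber $\phi^{-1}(y)$ is a $j$-dimensional $C^1$ submanifold. So the statement reduces to showing that $\mathcal L^{d-j}(\phi(S(\phi)))=0$ almost surely. I will prove the stronger statement
\[
\mathbb E\big[\mathcal H^{d-j}_\infty(\phi(S(\phi)))\big]=0,
\]
working on the events $\{\|\phi\|_{C^{1,\alpha}}\le K\}$, whose union over $K\in\mathbb N$ has full probability by (a). On each such event I estimate the expected cost of covering $\phi(S)$.

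Fix a scale $\delta>0$ and tile $\mathbb T^d$ by $\sim\delta^{-d}$ cubes $Q$ of side $\delta$ with centers $x_Q$. Let $\Sigma=\{M\in\mathbb R^{(d-j)\times d}:\ \mathrm{rank}\,M<d-j\}$. Fix a threshold $\eta\ge 2K\sqrt d\,\delta^\alpha$, to be chosen as a power of $\delta$. If $Q\cap S\neq\emptyset$, then $\mathrm{dist}(D\phi(x_Q),\Sigma)\le K(\sqrt d\delta)^\alpha\le\eta$. For such $Q$:
\begin{itemize}
\item[(i)] By Taylor's theorem, $\phi(Q)$ lies in an $O(K\delta^{1+\alpha})$-neighborhood of the affine image $\phi(x_Q)+D\phi(x_Q)(Q-x_Q)$.
\item[(ii)] This affine image is contained in a box whose side lengths are governed by the singular values of $D\phi(x_Q)$. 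These satisfy $\sigma_1,\dots,\sigma_{d-j-1}\lesssim K$, while the smallest one obeys $\sigma_{d-j}\le\eta$.
\end{itemize}
Hence $\phi(Q)$ is contained in a box of dimensions $(K\delta)^{d-j-1}\times(\eta\delta+K\delta^{1+\alpha})$. Covering this box by balls of radius $\rho=\eta\delta+K\delta^{1+\alpha}\sim \eta\delta$ costs
\[
\lesssim (K\delta/\rho)^{d-j-1}\,\rho^{d-j}\sim K^{d-j-1}\,\delta^{d-j}\,\eta .
\]

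The probabilistic input is (b), used at the center $x_Q$. The set $\Sigma$ is a real-algebraic cone of codimension $j+1$ in $\mathbb R^{(d-j)\times d}$. Intersected with the ball $\{|M|\le K\}$, its $\eta$-neighborhood can be covered by $\lesssim (K/\eta)^{(d-j)d-j-1}$ balls of radius $\eta$. Applying (b) to each of these balls gives
\[
\mathbb P\big(\mathrm{dist}(D\phi(x_Q),\Sigma)\le\eta,\ |D\phi(x_Q)|\le K\big)\lesssim_K \eta^{(d-j)d}\,\eta^{-(d-j)d+j+1}=\eta^{j+1}.
\]
Summing over the cubes yields the expected covering cost
\[
\mathbb E\big[\mathcal H^{d-j}_{\infty}(\phi(S))\,\mathbf 1_{\|\phi\|\le K}\big]\lesssim_K \delta^{-d}\cdot \eta^{j+1}\cdot\delta^{d-j}\eta=\delta^{-j}\eta^{j+2}.
\]

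Choosing $\eta\sim\delta^{\alpha}$ (the smallest threshold allowed) gives the bound $\delta^{\alpha(j+2)-j}$. This tends to $0$ as $\delta\to0$ exactly when $\alpha>\frac{j}{j+2}$, which is hypothesis (a). Since $\mathcal H^{d-j}_\infty(\phi(S))\ge 0$, Fatou's lemma (or simply the fact that the left side does not depend on $\delta$) gives $\mathcal L^{d-j}(\phi(S))=0$ almost surely on each event $\{\|\phi\|_{C^{1,\alpha}}\le K\}$. Taking the union over $K$ concludes the proof.

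I expect the main obstacle to be the measure-theoretic bookkeeping rather than the heuristics. Three points need care:
\begin{itemize}
\item[(1)] The events involve suprema of $D\phi$ over cubes, but (b) is only a pointwise condition. This is handled above by working with the single point $x_Q$ and the deterministic modulus $K\delta^\alpha$.
\item[(2)] One must verify the covering-number estimate for the $\eta$-neighborhood of the determinantal variety $\Sigma$. This follows from its codimension $j+1$, for instance via the stratification by rank together with smooth parametrizations $M=U\,\mathrm{diag}\,V$.
\item[(3)] The restriction to $\{\|\phi\|\le K\}$ must be combined with (b) without conditioning. This works because we only need an upper bound on the probability of an intersection of events.
\end{itemize}
Measurability of the relevant quantities on $C^0$ can be handled by using countable dense sets of centers.
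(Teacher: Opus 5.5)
Your argument is correct. It uses the same ingredients as the paper but assembles them differently.

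\textbf{The paper's route.} It splits the proof into two lemmas.
\begin{itemize}
\item[(1)] A probabilistic lemma: almost surely $\dim_H\{\mathrm{rank}\,D\phi\le k\}\le d-(d-k)(d-j-k)\alpha$. The proof counts cubes of side $2^{-m}$ whose centres have $D\phi$ within $C2^{-m\alpha}$ of $W_k$, bounds the expected count, and then applies Chebyshev and Borel--Cantelli.
\item[(2)] A deterministic lemma: for any $C^{1,\alpha}$ map, $\dim_H Y\le\beta$ implies $\dim_H\phi(Y)\le\frac{\beta+\alpha k}{1+\alpha}$. It takes an arbitrary cover of $Y$ by balls centred at points of $Y$, where the rank is exactly $\le k$, and uses the $r^{1+\alpha}$ flattening transversal to the image of $D\phi$.
\end{itemize}
Setting $k=d-j-1$, the arithmetic gives the threshold $\alpha>\frac{j}{j+2}$.

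\textbf{Your route.} You fuse the two steps at a single scale. You push the cube cover forward directly, controlling the image of each bad cube through the singular values at the (not necessarily critical) centre, via $\sigma_{d-j}=\mathrm{dist}(D\phi(x_Q),\Sigma)\le\eta$. You then take the expectation of the resulting bound on $\mathcal H^{d-j}_\infty(\phi(S))$. The numerology is identical: about $\delta^{-d+(j+1)\alpha}$ bad cubes in expectation, each with image of measure about $\delta^{d-j+\alpha}$.

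\textbf{What each buys.} Your version is shorter. It needs no Borel--Cantelli and no passage between $\mathcal H^\beta_\infty$ and $\mathcal H^\beta$. It also yields the first-moment statement $\mathbb E[\mathcal L^{d-j}(\phi(S))\mathbf 1_{\{\|\phi\|_{C^{1,\alpha}}\le K\}}]=0$. For the measurability point, it suffices that the content is pointwise dominated by the measurable sum $\sum_Q \mathbf 1_{A_Q} C_K\delta^{d-j}\eta$.

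The paper's decomposition buys modularity. It gives an almost-sure dimension bound for every rank stratum and a deterministic lemma valid for arbitrary covers. Both are reused with $j=1$, $k=d-3$ in the no-triods proposition, and they give Hausdorff-dimension bounds on $\phi(Y)$ rather than mere nullity.

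Your scheme does extend to general $k$. The image of a bad cube then lies in a box of size $(K\delta)^k\times(\eta\delta)^{d-j-k}$, and the expected cost is $\delta^{-j}\eta^{(d-j-k)(d-k+1)}$. For $j=1$, $k=d-3$ this recovers the threshold $\frac18$.
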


The lowest regularity is achieved for $j=1$, in which case the critical H\"older exponent is $\frac{1}{3}$. We do not know whether the exponent is sharp for any of the choices of $d$ and $j$.

\subsection{Context of our results} In a seminal 1949 note, Onsager \cite{Onsager1949} observed that weak solutions of the incompressible Euler equations conserve kinetic energy above H\"older regularity $1/3$, and may fail to conserve kinetic energy below H\"older regularity $1/3$. This prediction, now known as Onsager's conjecture, has since been resolved: energy conservation holds for $C^\alpha$ solutions with $\alpha>1/3$ \cite{Eyink1994, ConstantinETiti1994,DuchonRobert2000,CheskidovConstantinFriedlanderShvydkoy2008}, while non-conservative solutions exist for $\alpha<1/3$ \cite{DeLellisSzekelyhidi2009,DeLellisSzekelyhidi2010,DeLellisSzekelyhidi2013,BuckmasterDeLellisIsettSzekelyhidi2015, Isett2018, BuckmasterDeLellisSzekelyhidiVicol2019}. 
The exponent $1/3$ thus marks a sharp threshold separating rigidity from flexibility in nonlinear  incompressible fluid dynamics. The problem of providing a rigorous example of anomalous dissipation in the vanishing viscosity limit for the (unforced) Navier--Stokes equations in 3D remains open and it seems extremely challenging. 

For passive scalars there has been very important progress in recent years, even though the existing results still only scrape the surface of our understanding. In particular the literature is still far from providing a rigorous foundation for a phenomenological theory. Only a few deterministic examples of particular vector fields that give rise to anomalous dissipation and occasionally to some version of the Obukhov-Corrsin-Yaglom balance have been built, in \cite{DrivasElgindiIyerJeong2019,BrueDeLellis2023,ColomboCrippaSorella2023,JohanssonSorella2023,JohanssonSorella2024,BrueColomboCrippaDeLellisSorella2024,ArmstrongVicol2024,ElgindiLiss2024,BurczakSzekelyhidiWu2023} (see also \cite{HuysmansTiti2025}). At the level of random fields, the anomalous dissipation, anomalous regularization, and Richardson dispersion have been recently established for the first time in the groundbreaking work \cite{ArmstrongBouRabeeKuusi2026} for rather general classes of autonomous random vector fields with a {\it negative} H\"older exponent, close to $-1$. It is also proved that anomalous dissipation necessarily implies $\alpha + 2\beta \leq 1$, cf. \cite{DrivasElgindiIyerJeong2019,AkramovWiedemann2019}. Some of the aforementioned examples pertain to autonomous vector fields. For the sake of brevity, in this brief overview we will not discuss in detail the results in stochastic PDEs pertaining to the Kraichnan model and its relatives as the requirements on the roughness in time (white noise) on the vector field position them in a different context than the present note, but we refer for instance to \cite{BagnaraGrottoMaurelli2024,BernardGawedzkiKupiainen1998,DrivasGaleatiPappalettera2025,EyinkXin2000,GaleatiGrottoMaurelli2024,LeJanRaimond2004,Rowan2024,Rowan2025} (and see references therein).

Concerning the thresholds found in this paper, the arguments supporting the anomalous dissipation and the anomalous regularization for passive scalar transport, both in physics and in applied mathematics, rely on the dimensional analysis, energy cascade, and (sometimes) certain statistical homogeneities. To the best of our knowledge, none of them suggests the existence of a critical exponent in the range $(-1,1)$, in any dimension. In fact, one could argue that, since most of the existing deterministic examples, e.g., \cite{DrivasElgindiIyerJeong2019, ColomboCrippaSorella2023, JohanssonSorella2023} extend to $\alpha<1$, the literature so far suggested otherwise. The only exception is \cite{ArmstrongVicol2024}, which indeed restricts to $\alpha < 1/3$, but their example is also deterministic and it is hard to say whether it is a hard restriction or an artefact of the method. One should also reiterate that our vector fields are autonomous (but so are the ones in \cite{ArmstrongBouRabeeKuusi2026} and \cite{JohanssonSorella2023}) and the results are probabilistically a.s.. Finally, the work \cite{ArmstrongBouRabeeKuusi2026} was motivated by preexisting papers in the physics literature which use renormalization group ideas to analyze superdiffusivity and Richardson dispersion, cf. for instance \cite{BouchaudGeorges} (and also \cite{BouchaudComtetGeorgesLeDoussal1987,KonkonenKarjalainen1988}). Neither in this literature there seems to be an indication that some exponent $\alpha<1$ might be critical (see for instance the Richardson law in \cite[p. 216]{BouchaudGeorges}).

\subsection{Acknowledgments} The authors wish to thank Lucio Galeati for several useful comments to earlier versions of their paper. The research of the first author has been supported by the ERC/{\allowbreak}EPSRC Horizon Europe Guarantee EP/X020886/1 and the by the Istituto Nazionale di Alta Matematica (INdAM), group GNAMPA. The research of the second author has been supported by the Cambridge Trust and the Cantab Capital Institute for Mathematics of Information. The research of the third and fourth authors has been supported by the Simons Foundation through the Simons Initiative on the Geometry of Flows (Grant Award ID BD-Targeted-00017375 CDeL, SM). The fourth author is also partially supported by the Simons Collaboration on Localisation of Waves, 563916 SM.

\section{Reduction to the Morse-Sard property}\label{s:general}

\subsection{The DiPerna-Lions property} The following property was introduced in the seminal work of DiPerna and Lions \cite{DiPernaLions1989}, where the authors proved it for any vector field with Sobolev regularity $W^{1,1}$. It is a consequence of their theory that, when all bounded weak solutions of the transport equation are ``renormalized'', then uniqueness of the Cauchy problem and strong stability properties hold. 

\begin{definition}\label{d:renormalization}
Let $v$ be a bounded divergence-free vector field on $\mathbb T^d$ (not necessarily autonomous). A bounded weak solution $\theta$ of 
\begin{equation}\label{e:transport}
\left\{
\begin{array}{ll}
\partial_t \theta + (v\cdot \nabla )\theta = 0 \\ \\
\theta (\cdot, 0) = \theta_{in}
\end{array}
\right.
\end{equation}
is a bounded function $\theta$ which satisfies 
\begin{equation}\label{e:distributional}
\int_0^\infty \int_{\mathbb T^d} \theta (\partial_t \varphi + v\cdot \nabla \varphi)\, dx\, dt 
= \int_{\mathbb T^d} \theta_{in} (x) \varphi (x, 0)\, dx 
\end{equation}
for every test function $\varphi \in C^1_c (\mathbb T^d \times \mathbb R)$. 

The vector field $v$ is said to have the DiPerna-Lions renormalization property if, for every bounded initial data $\theta_{in}$, every bounded weak solution $\theta$ of \eqref{e:transport}, and every test function $\beta \in C^1 (\mathbb R)$, the scalar $\beta (\theta)$ is a weak solution of \eqref{e:transport} with initial data $\beta (\theta_{in})$.  
\end{definition}

An important extension of the renormalization property to $BV$ vector fields was later achieved by Ambrosio in \cite{Ambrosio2004}, while the novel Lagrangian approach in \cite{CrippaDeLellis2008} has spurred further extensions to other critical regularities, cf. \cite{BouchutCrippa}. The sharpness, at the deterministic level, of the threshold $\alpha=1$ in the Sobolev case is known since \cite{Depauw}.

\subsection{Dimension $d=2$ and the proof of (the renormalization part of) Theorem~\ref{t:no-anomaly-2}} 
In dimension two the DiPerna-Lions renormalization property for random autonomous \break divergence-free vector fields follows immediately from a simple Fubini argument and the main result of \cite{AlbertiBianchiniCrippa2014}. In fact, by the assumption of Theorem \ref{t:no-anomaly-2}, we have
\[
\int \mathcal{L}^2 (\{ x \in \mathbb{T}^2 : v (x) =0 \}) \, d{\mathbb P} (v) = \int_{\mathbb T^2} {\mathbb P} (\{v \in C^0 (\mathbb{T}^2; \mathbb{R}^2) : v(x) =0\}) \, dx = 0.
\]
and in particular with probability one the zero set of the vector field $v$ has measure zero. Assume now for the moment that the average of $v$ is zero, so that we can introduce the stream function $\phi$ of $v$. We then conclude that the measure 
\[
\mu := \mathbf{1}_{\{\nabla \phi =0\}} \mathcal{L}^2 
\]
vanishes identically $\mathbb{P}$-almost surely and in particular that $\phi$ satisfies the {\em weak Sard property} in \cite{AlbertiBianchiniCrippa2014}, which is the condition $\phi_\sharp \mu \perp \mathcal{L}^1$ (see Definition~\ref{d:weak-Sard}). This is then proved in \cite{AlbertiBianchiniCrippa2014} to be equivalent to the DiPerna-Lions renormalization property. As already mentioned, the derivation of the further conclusions in Theorem \ref{t:no-anomaly-2} will be given in Sections \ref{s:no-dissipation}-\ref{s:Yaglom-2}.

If $v$ does not have average $0$, then there is no global stream function for $v$ on the torus. However a stream function exists in every subset $\Omega$ which is simply connected. The result in \cite{AlbertiBianchiniCrippa2014} can then be used to show that, when $\Omega$ is simply connected and $\theta$ and $\beta$ are as in Definition \ref{d:renormalization}, then $\beta (\theta)$ will be a distributional solution of the transport equation \eqref{e:transport} in $\Omega \times [0, \infty)$ with initial data $\beta (\theta_{in})$. But a simple partition of unity allows then to show that $\beta (\theta)$ is in fact a distributional solution on the whole torus.

Observe that the argument for the weak Sard property are not really dimension dependent, namely the following holds.
\begin{definition}\label{d:weak-Sard}
Let $\phi\in C^1( \mathbb T^d, \mathbb R^{d-1})$ and let $Z$ be its critical set $Z=\{ x: {\rm rank}\, (D\phi (x)) \leq d-2\}$. Then we say that $\phi$ has the weak Sard property if $\phi_\sharp (\mathbf{1}_Z \mathcal{L}^d) \perp \mathcal{L}^{d-1}$.
\end{definition}

\begin{proposition}\label{p:weak-Sard}
Consider a probability measure on the space $C^1 (\mathbb T^d, \mathbb R^{d-1})$ such that 
$\mathbb P (\{\phi: {\rm rank}\, D\phi (x_0) < d-1\})=0$ for every $x_0$. Then its critical set $Z$
is, $\mathbb P$-almost surely, a Lebesgue null set and therefore $\phi$ has the weak Sard property. 
\end{proposition}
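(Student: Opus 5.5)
The plan mirrors the two-dimensional Fubini argument given above. First, the map $(\phi,x)\mapsto \phi$ restricted to $Z$ must be jointly measurable. Concretely, I will check that the set
\[
\mathcal Z := \{(\phi,x)\in C^1(\mathbb T^d,\mathbb R^{d-1})\times \mathbb T^d : {\rm rank}\, D\phi(x)\leq d-2\}
\]
is closed, hence Borel, in the product of the separable Banach space $C^1$ with the torus. The evaluation $(\phi,x)\mapsto D\phi(x)$ is continuous for the $C^1$ topology. The rank condition is the vanishing of all $(d-1)\times(d-1)$ minors of $D\phi(x)$, which is a closed condition. So $\mathbf 1_{\mathcal Z}$ is a nonnegative Borel function on the product space, and Tonelli's theorem applies to $\mathbb P\otimes\mathcal L^d$. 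This measurability check is the only point needing care, and it is where the $C^1$ hypothesis (rather than mere continuity) enters. I expect it to be the main, though mild, obstacle.

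Second, I apply Tonelli to get
\[
\int \mathcal L^d(Z_\phi)\, d\mathbb P(\phi) = \int_{\mathbb T^d} \mathbb P(\{\phi: {\rm rank}\, D\phi(x)<d-1\})\, dx = 0 ,
\]
where $Z_\phi$ is the critical set of $\phi$. The integrand on the right vanishes for every $x$ by assumption. The nonnegative function $\phi\mapsto\mathcal L^d(Z_\phi)$ therefore has zero integral, so $\mathcal L^d(Z_\phi)=0$ for $\mathbb P$-a.e. $\phi$.

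Finally, for any such $\phi$ the measure $\mathbf 1_Z\mathcal L^d$ is the zero measure. Its push-forward $\phi_\sharp(\mathbf 1_Z\mathcal L^d)$ is then also zero, and the zero measure is trivially singular with respect to $\mathcal L^{d-1}$. This gives the weak Sard property of Definition~\ref{d:weak-Sard} almost surely.
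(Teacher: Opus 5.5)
Your proof is correct and is essentially the paper's argument. The paper states the proposition right after the two-dimensional Fubini computation, remarking that the argument is not dimension dependent. It therefore rests on the same exchange $\int \mathcal L^d(Z)\,d\mathbb P = \int_{\mathbb T^d}\mathbb P(\{\phi: {\rm rank}\, D\phi(x)<d-1\})\,dx = 0$ that you carry out. Your verification that $\{(\phi,x): {\rm rank}\, D\phi(x)\le d-2\}$ is closed in the product of the separable space $C^1$ with $\mathbb T^d$ supplies the joint measurability that the paper leaves implicit.
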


\subsection{Higher dimensions} Theorem \ref{t:no-anomaly-3} is a special case of the following more general one. In order to state it we first recall the Hodge $\star$ operator on (the flat torus) $\mathbb T^d$, which is a canonical way of mapping $k$-forms into $(d-k)$-forms. For our purposes we are interested only in the case $k=1$. If $x_1, \ldots , x_d$ are the canonical coordinates on $\mathbb T^d = \mathbb R^d/\mathbb Z^d$ the Hodge $\star$ maps the 1-form $\sum_{i=1}^d w_i dx_i$ to the $d-1$ form
\[
\star \sum_{i=1}^d w_i dx_i =  \sum_{i=1}^d (-1)^i w_i dx_1 \wedge \ldots \wedge dx_{i-1} \wedge \widehat{dx_i} \wedge dx_{i+1} \wedge \ldots \wedge dx_d\, .
\]
In particular the vector field $w= (w_1, \ldots, w_d)$ is divergence-free if and only if $d \star \sum_{i=1}^d w_i dx_i = 0$. 

Our main theorem is then the following.

\begin{theorem}\label{t:no-anomaly-general}
Consider a probability measure $\mathbb P$ on the space of continuous functions $\phi  = (\phi_1, \ldots, \phi_{d-1}) \in C (\mathbb T^d, \mathbb R^{d-1})$ satisfying the following two assumptions:
\begin{itemize}
\item[(a)] There is $\alpha > \frac{1}{8}$ such that $\phi \in C^{1,\alpha} (\mathbb{T}^d; \mathbb{R}^{d-1} )$ $\mathbb P$-almost surely;
\item[(b)] There is a constant $C$ such that 
\begin{equation}\label{e:upper-bound}
\mathbb P (\{\phi : |D\phi (x) - M_0|\leq r\}) \leq C r^{d(d-1)}\quad \forall r>0\, , \quad \forall x\in \mathbb T^d\, , \quad \mbox{and} \quad
\forall M_0 \in \mathbb R^{(d-1)\times d}\, .
\end{equation}
\end{itemize}
Then $\mathbb P$-almost surely, the divergence-free vector field $w$ such that 
\begin{equation}\label{e:Hodge}
\star \sum_{i=1}^d  w_i dx_i = d\phi_1 \wedge \ldots \wedge d\phi_{d-1}
\end{equation}
has the DiPerna-Lions renormalization property.
\end{theorem}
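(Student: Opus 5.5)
The plan is to follow the same blueprint as in dimension two: reduce the DiPerna--Lions property of $w$ to a geometric, Sard-type condition on $\phi$, and then verify that condition almost surely by a Fubini/expectation argument based on assumption (b).

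\textbf{Step 1 (geometry of $w$).} From \eqref{e:Hodge} one checks that $w\cdot\nabla\phi_i=0$ for every $i$, that $|w|=|\Lambda^{d-1}D\phi|$ (the norm of the $(d-1)\times(d-1)$ minors), and that $w(x)=0$ exactly on the critical set $Z=\{\mathrm{rank}\,D\phi\le d-2\}$. Off $Z$ the level sets $\phi^{-1}(y)$ are $C^1$ curves and $w$ is a nonvanishing tangent field to them. Hence, exactly as in \cite{AlbertiBianchiniCrippa2014}, the transport equation \eqref{e:transport} disintegrates, via the coarea formula for $\phi$, into one-dimensional transport problems along the connected components of the level sets. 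I would prove (or extract from the arguments of \cite{AlbertiBianchiniCrippa2014}) a higher-dimensional version of their theorem: renormalization holds provided
\begin{itemize}
\item[(i)] $\phi_\sharp(\mathbf 1_Z\mathcal L^d)\perp\mathcal L^{d-1}$;
\item[(ii)] for $\mathcal L^{d-1}$-a.e.\ $y$, the set $\phi^{-1}(y)\cap Z$ is $\mathcal H^1$-null, and is small enough (e.g.\ countable or finite) that it does not let the curve pieces of $\phi^{-1}(y)\setminus Z$ be glued in a nonunique way.
\end{itemize}
On each such curve the one-dimensional equation with nonvanishing speed is trivially renormalizable, and a partition-of-unity argument globalizes the conclusion, as in the two-dimensional case.

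\textbf{Step 2 (probabilistic verification).} Condition (i) follows from Proposition \ref{p:weak-Sard}: by (b), $\mathbb P(\mathrm{rank}\,D\phi(x)\le d-2)=0$ for each $x$, so $\mathcal L^d(Z)=0$ almost surely. For (ii) I would use a multiscale covering. Partition $\mathbb T^d$ into cubes $Q$ of side $\ell$. The set of $(d-1)\times d$ matrices of rank $\le d-2$ has codimension $2$. Combined with (b), and more precisely with the stratification by the two smallest singular values $\sigma_{d-2}\ge\sigma_{d-1}$, this gives
\[
\mathbb P\bigl(\sigma_{d-1}(D\phi(x))\le r,\ \sigma_{d-2}(D\phi(x))\le s\bigr)\lesssim r^2 s^{4},
\]
up to the exact powers coming from the volume of these strata. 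If $Q$ meets $Z$ and $[\phi]_{C^{1,\alpha}}\le K$, then $\sigma_{d-1}(D\phi)\lesssim K\ell^\alpha$ on $Q$, so $\phi(Q)$ lies in a box of size about $\ell^{d-2}\times\ell^{1+\alpha}$, or thinner when $\sigma_{d-2}$ is also small. Summing $\mathcal L^{d-1}(\phi(Q))$ over the bad cubes, and applying Fubini in $(y,\mathbb P)$, bounds the expected $\ell$-covering numbers of $\phi^{-1}(y)\cap Z$, integrated in $y$. Weighting by $\ell$ for $\mathcal H^1$, or by $1$ for counting, should yield quantities tending to $0$ as $\ell\to0$. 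Choosing $\ell=2^{-k}$ and applying Borel--Cantelli, after localizing on the events $\{[\phi]_{C^{1,\alpha}}\le K\}$, then gives (ii) almost surely.

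\textbf{Main obstacle.} The crude estimate above reproduces only the threshold $\alpha>\frac13$ of Theorem \ref{t:classical-Morse-Sard}, since it requires $\phi^{-1}(y)\cap Z=\emptyset$. Two things must combine to reach $\alpha>\frac18$. First, we only need the weaker smallness (ii), not emptiness. Second, we must exploit the full power $r^{d(d-1)}$ in (b) through the joint stratification of the small singular values. I expect the delicate part to be the bookkeeping over the dyadic scales of $\sigma_{d-2}$ and $\sigma_{d-1}$, and the proof that the resulting "small" singular set on a.e.\ level curve really suffices for the uniqueness/renormalization argument of Step 1. This is also where the specific exponent $\frac18$ should emerge. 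I would first try to get the balance right in the case $d=3$, and then check that the counting is dimension independent.
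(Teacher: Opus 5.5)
\textbf{Verdict: there is a genuine gap.} Your blueprint matches the paper's: reduce to \cite{AlbertiBianchiniCrippa2014}, get weak Sard from Proposition~\ref{p:weak-Sard}, then run a multiscale covering with Borel--Cantelli. But your condition (ii) is the wrong target, and that is exactly why you are stuck at $\frac{1}{3}$.

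\textbf{Why (ii) fails.} Its first half is automatic and needs no probability. By the coarea formula, $\int \mathcal H^1(Z\cap\phi^{-1}(y))\,dy=\int_Z J\phi\,dx=0$ for every $C^1$ map, because the Jacobian vanishes on $Z$. Its second half (finiteness or countability of $\phi^{-1}(y)\cap Z$, or an unspecified ``unique gluing'') is not the hypothesis used in Sections 6.2--6.3 of \cite{AlbertiBianchiniCrippa2014}. You also should not expect it for $\alpha<\frac{1}{3}$. Your own count gives about $\ell^{-d+2\alpha}$ cubes meeting $Z$, each with image of $(d-1)$-volume about $\ell^{d-1+\alpha}$. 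So on average over $y$ a fiber meets about $\ell^{3\alpha-1}$ bad cubes, and heuristically $\phi^{-1}(y)\cap Z$ has dimension about $1-3\alpha>0$ for a positive-measure set of $y$. Stratifying in $\sigma_{d-2}$ does not help. With your bound $r^2s^4$ and image volume proportional to $s$, the dyadic sum in $s$ is dominated by $\sigma_{d-2}\sim 1$, i.e.\ by generic corank-one critical points. No bookkeeping over the scales of $\sigma_{d-2},\sigma_{d-1}$ on $Z$ will produce $\frac{1}{8}$.

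\textbf{The missing idea.} Besides weak Sard and $\nabla\phi_k\cdot w=0$, the condition actually needed is that a.e.\ level set contains no triods. Corank-one critical points are harmless for this. By the proof of Lemma 2.16 in \cite{ABC-2}, inside the open set $U=\{\mathrm{rank}\,D\phi\ge d-2\}$ triods occur only for a Lebesgue-null set of values. Roughly, near a rank-$(d-2)$ point the level sets are level curves of a function on two-dimensional $C^1$ surfaces, where the planar structure theory applies however many critical points those curves contain. So it suffices that $\phi(Y)$ be null for $Y=\{\mathrm{rank}\,D\phi\le d-3\}$. This set has codimension $3\cdot 2=6$ in $\mathbb R^{(d-1)\times d}$, not $2$.

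\textbf{How $\frac{1}{8}$ arises.} Running your probabilistic covering on $Y$ gives $\dim_H Y\le d-6\alpha$ almost surely (Lemma~\ref{l:probabilistic}). Near a point of $Y$, the image $\phi(B_r)$ lies in a $(d-3)$-dimensional disk of radius $Cr$, thickened by $r^{1+\alpha}$ in the two remaining directions. This yields $\dim_H\phi(Y)\le\frac{d-6\alpha+(d-3)\alpha}{1+\alpha}$ (Lemma~\ref{l:deterministic}), which is $<d-1$ exactly when $\alpha>\frac{1}{8}$. No finer information on $\phi^{-1}(y)\cap Z$ is needed.
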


The key to the proof of Theorem \ref{t:no-anomaly-general} is again to use the insight of Alberti, Bianchini, and Cripppa in \cite{AlbertiBianchiniCrippa2014}. First of all, note that the components $\phi_k$ of the vector map $\phi$ satisfy $\nabla \phi_k \cdot w \equiv 0$, namely one of the conditions required in \cite[Section 6.2]{AlbertiBianchiniCrippa2014}. This orthogonality relation is easy to verify from the properties of the Hodge $\star$ operator. In fact for every vector field $z$ we have the relation
\[
(z\cdot w) dx_1 \wedge \ldots \wedge dx_d = \star \sum_{i=1}^d  w_i dx_i \wedge \sum_i z_i dx_i = d\phi_1 \wedge \ldots \wedge d\phi_{d-1} \wedge \sum_i z_i dx_i\, .
\]
In particular we conclude that, for any $k = 1, \ldots, d - 1$,
\[
(\nabla \phi_k \cdot w) dx_1 \wedge \ldots \wedge dx_d = d\phi_1 \wedge \ldots \wedge d\phi_{d-1}\wedge d\phi_k = 0\, .
\]
Next, by Proposition \ref{p:weak-Sard} $\phi$ satisfies the weak Sard property almost surely. In this case, however, the weak Sard property alone is not enough to guarantee the renormalization property. In fact the authors in \cite{AlbertiBianchiniCrippa2014} need additionally the condition that ``triods'' are absent for a.e. level set of $\phi$ (for the precise definition of a triod we refer to \cite[Section 2.3]{ABC-2}) and in the subsequent paper \cite{ABC-2} they show that this is not a consequence of the weak Sard property and might fail for $C^{1,\alpha}$ maps $\phi$. On the other hand, we can prove that this property does hold under the assumptions of Theorem \ref{t:no-anomaly-general}. 

\begin{proposition}\label{p:no-triods}
Let $\mathbb P$ be a probability measure satisfying the same assumptions as in Theorem \ref{t:no-anomaly-general}. Then $\mathbb P$-almost surely the map $\phi \in C^{1,\alpha} (\mathbb{T}^d ; \mathbb{R}^{d-1})$ satisfies the following property:
\begin{itemize}
\item[(NT)] For a.e. $y\in \mathbb R^{d-1}$ the level set $\phi^{-1} (\{y\})$ contains no triods.
\end{itemize}
\end{proposition}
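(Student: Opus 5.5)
The plan is to separate the critical set $Z$ of $\phi$ by rank, $Z = Z_{d-2}\cup Z_{\le d-3}$ with $Z_{d-2}=\{{\rm rank}\, D\phi = d-2\}$ and $Z_{\le d-3}=\{{\rm rank}\, D\phi\le d-3\}$, and to treat the two pieces by different mechanisms. Near regular points the implicit function theorem makes $\phi^{-1}(y)$ a $C^1$ curve, so no triod can branch there. For the bad piece $Z_{\le d-3}$ I would prove the Morse--Sard-type statement $\mathcal L^{d-1}(\phi(Z_{\le d-3}))=0$ almost surely; this is where $\alpha>\frac18$ enters. The intermediate piece $Z_{d-2}$ should be handled by reducing locally to the two-dimensional theory of \cite{AlbertiBianchiniCrippa2014}, using the weak Sard property from Proposition~\ref{p:weak-Sard}.

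\emph{Step 1 (the set $Z_{\le d-3}$).} Matrices in $\mathbb R^{(d-1)\times d}$ of rank $\le d-3$ form a cone of codimension $2\cdot 3=6$. Cover $\mathbb T^d$ by $\sim r^{-d}$ cubes $Q$ of side $r$, with centres $x_Q$. By (a), if $Q$ meets $Z_{\le d-3}$ then $D\phi(x_Q)$ lies within $C r^{\alpha}$ of that cone, and $C$ may be taken deterministic after restricting to the event $\{\|\phi\|_{C^{1,\alpha}}\le K\}$. Covering the $r^\alpha$-neighbourhood of the cone (intersected with a ball) by $\sim r^{-\alpha(d(d-1)-6)}$ balls of radius $r^\alpha$ and applying \eqref{e:upper-bound} to each gives
\[
\mathbb P(Q\cap Z_{\le d-3}\neq\emptyset)\lesssim r^{6\alpha}.
\]
On such a cube, $\phi(Q)$ is contained in a box with $d-3$ sides of length $\sim r$ and $2$ sides of length $\sim r^{1+\alpha}$, so $\mathcal L^{d-1}(\phi(Q))\lesssim r^{d-1+2\alpha}$. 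Summing over cubes,
\[
\mathbb E\bigl[\mathcal L^{d-1}(\phi(Z_{\le d-3}))\mathbf 1_{\{\|\phi\|\le K\}}\bigr]\lesssim r^{-d}\,r^{6\alpha}\,r^{d-1+2\alpha}=r^{8\alpha-1}\to 0,
\]
and letting $K\uparrow\infty$ gives the claim almost surely. Hence for a.e.\ $y$ the level set $\phi^{-1}(y)$ avoids $Z_{\le d-3}$. (The same count with codimension $2$ yields $r^{3\alpha-1}$, which is why one cannot simply discard all of $Z$ when $\alpha\le\frac13$.)

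\emph{Step 2 (points of $Z_{d-2}$).} Near $x_0\in Z_{d-2}$, after an affine change of coordinates in target and source, $d-2$ components of $\phi$ have linearly independent differentials. The implicit function theorem then gives a $C^1$ fibration of a neighbourhood by $2$-dimensional $C^1$ surfaces $S_z=\{(\phi_1,\dots,\phi_{d-2})=z\}$. On each surface the remaining component restricts to a $C^1$ function $\psi_z$ of two variables, and the level sets of $\phi$ are exactly the level sets of $\psi_z$ on $S_z$. By Proposition~\ref{p:weak-Sard}, $\mathcal L^d(Z)=0$ almost surely. Using the coarea formula for the fibration (its Jacobian is nondegenerate on the neighbourhood), one deduces that for a.e.\ $z$ the critical set of $\psi_z$ has zero $\mathcal H^2$-measure in $S_z$, so $\psi_z$ has the weak Sard property. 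The two-dimensional structure theorem of \cite{AlbertiBianchiniCrippa2014} then says that for a.e.\ value $c$, the connected components of $\psi_z^{-1}(c)$ are simple curves, hence contain no triods. Since $y\mapsto(z,c)$ is bi-Lipschitz, this gives absence of triods through $Z_{d-2}$ for a.e.\ $y$, locally. A countable cover of $Z_{d-2}$ by such neighbourhoods, combined with Step~1 and the regular-point observation, yields (NT). A triod is compact, so it is covered by finitely many such charts; the local statements therefore globalise by a countable union of null sets in $y$.

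\emph{Main obstacle.} Step~1 is routine once the counting is set up, because the exponents match $\alpha>\frac18$ exactly. I expect the main obstacle to be Step~2: making rigorous that the absence of triods is a local property which can be read off from the branch point alone. The branch point of a triod might lie in $Z_{d-2}$ while the arms pass through regular points, and the two-dimensional ABC result has to be applied to the non-flat surfaces $S_z$, where $\psi_z$ is only $C^1$. I would first try to transfer everything to flat coordinates using the $C^1$ diffeomorphism given by the implicit function theorem. If the fibrewise weak Sard statement proves delicate, I would instead establish it directly by a Fubini argument over the probability space, analogous to Proposition~\ref{p:weak-Sard}.
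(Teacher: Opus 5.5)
Your proof is correct. Its core mechanism matches the paper's: the rank-$\le d-3$ matrices have codimension $6$, and $\phi$ flattens to thickness $r^{1+\alpha}$ in the two normal directions, giving the exponent $8\alpha-1>0$. The organization differs, however.

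\textbf{The set $Y$.} The paper handles the open set $U=\{{\rm rank}\,D\phi\ge d-2\}$ purely by citing \cite[Proof of Lemma 2.16]{ABC-2}. It then proves that $\phi(Y)$, with $Y=\{{\rm rank}\,D\phi\le d-3\}$, is null through two separate lemmas:
\begin{itemize}
\item a Borel--Cantelli count on dyadic grids, giving $\dim_H Y\le d-6\alpha$ almost surely;
\item a deterministic lemma turning an arbitrary cover of $Y$ into slab covers of $\phi(Y)$, giving $\dim_H\phi(Y)\le\frac{d-6\alpha+(d-3)\alpha}{1+\alpha}<d-1$.
\end{itemize}
Your Step 1 fuses these into a single first-moment bound on a fixed grid, $\mathbb E[\mathcal L^{d-1}(\phi(Y))\mathbf 1_{\{\|\phi\|\le K\}}]\lesssim r^{8\alpha-1}$. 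This is shorter and avoids Borel--Cantelli and Hausdorff premeasures. With codimension $j+1$ and one thin direction it also reproduces the threshold $\alpha>\frac{j}{j+2}$ of Theorem \ref{t:classical-Morse-Sard}. The paper's factorization gives in exchange the stronger almost-sure dimension bounds on $Y$ and $\phi(Y)$.

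\textbf{The set $U$.} Your Step 2 re-derives the cited ABC-2 statement by reducing, via the implicit function theorem, to planar level sets. This makes the proof self-contained, and two simplifications are available.

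First, the weak Sard input is superfluous. The planar structure theorem of \cite{AlbertiBianchiniCrippa2014} (for a.e. level, every component of positive length is a simple closed curve) holds for every compactly supported Lipschitz function, so you can simply extend $\psi_z$ off the chart.

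Second, localize at the branch point rather than covering the whole triod by finitely many charts. A triod spread over several charts need not produce a triod inside any single one. However, every triod contains sub-triods with the same branch point inside any neighbourhood of that point. So it suffices to check three cases for the branch point:
\begin{itemize}
\item it cannot be a regular point;
\item it lies in $Y$ only for $y$ in the null set $\phi(Y)$;
\item otherwise it falls into one of countably many charts around points of $Z_{d-2}$.
\end{itemize}
With this, the obstacle you anticipated disappears. The only remaining technicality is Fubini in $(z,c)$, which is legitimate because the set of bad values is analytic (a projection of a Borel set of triples of arcs) and hence measurable.
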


With Proposition \ref{p:no-triods} at hand, we can use the results of \cite{AlbertiBianchiniCrippa2014} to conclude the renormalization property for any such $\phi$ which satisfies the weak Morse-Sard property, cf. Section 6.2 and Section 6.3 in \cite{AlbertiBianchiniCrippa2014}. 

\subsection{Absence of triods}\label{s:dimension-estimate}

The proof of Proposition \ref{p:no-triods} will be split into two steps. In the first step we will take advantage of  probabilistic arguments. The statement is as follows.

\begin{lemma}\label{l:probabilistic} Let $d\geq 2$, $1\leq j \leq d-1$ and $k\in \{0, \ldots , d-1-j\}$. 
Consider a probability measure $\mathbb P$ on $C^{1,\alpha} (\mathbb{T}^d; \mathbb{R}^{d-j})$ which satisfies condition (b) in Theorem \ref{t:classical-Morse-Sard}, for $\alpha \in (0,1)$. Then the Hausdorff dimension of the set 
\begin{equation}\label{e:critical}
Y := \{ x \in \mathbb{T}^d : {\rm rank}\, (D\phi (x)) \leq k\}
\end{equation}
is, $\mathbb P$-almost surely, at most $d-(d-k)(d-j-k)\alpha$.
\end{lemma}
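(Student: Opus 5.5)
We bound the expected number of small cubes that meet $Y$, and then turn this into a Hausdorff dimension bound by a Borel--Cantelli (or Fatou) argument. The set of matrices of rank $\leq k$ in $\mathbb R^{(d-j)\times d}$ is a real algebraic variety $\Sigma_k$ of dimension $k(d + (d-j) - k)$. Its codimension is therefore
\[
d(d-j) - k(2d-j-k) = (d-k)(d-j-k).
\]
Let $s := d-(d-k)(d-j-k)\alpha$ be the target exponent.

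\textbf{Step 1 (localization).} First reduce to the event $\{\|\phi\|_{C^{1,\alpha}}\leq L\}$ for a fixed $L$. Letting $L\to\infty$ along integers then recovers the almost sure statement. Fix $n$ and cover $\mathbb T^d$ by the $2^{nd}$ dyadic cubes $Q$ of side $2^{-n}$, with centers $x_Q$. If $Q\cap Y\neq\emptyset$, then there is $x\in Q$ with $D\phi(x)\in\Sigma_k$. By the Hölder bound, $|D\phi(x_Q)-D\phi(x)|\leq C L 2^{-n\alpha}$, so
\[
\operatorname{dist}(D\phi(x_Q),\Sigma_k)\leq CL\,2^{-n\alpha}=:\rho_n .
\]

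\textbf{Step 2 (probability that one cube is bad).} Cover the $\rho_n$-neighbourhood of $\Sigma_k$ intersected with the ball $\{|M|\leq L\}$ by balls of radius $\rho_n$ centered at matrices $M_0$. Because $\Sigma_k$ is a cone over a compact algebraic set of dimension $k(2d-j-k)$, one can use $N\lesssim_L \rho_n^{-k(2d-j-k)}$ balls. Applying hypothesis (b) to each ball and summing gives
\[
\mathbb P\big(\|\phi\|_{C^{1,\alpha}}\leq L,\ \operatorname{dist}(D\phi(x_Q),\Sigma_k)\leq\rho_n\big)\lesssim \rho_n^{-k(2d-j-k)}\rho_n^{d(d-j)} = \rho_n^{(d-k)(d-j-k)} \lesssim 2^{-n\alpha(d-k)(d-j-k)} .
\]
Summing over all cubes, the expected number $N_n$ of cubes meeting $Y$ satisfies $\mathbb E[N_n\mathbf 1_{\{\|\phi\|\leq L\}}]\lesssim 2^{n s}$.

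\textbf{Step 3 (from counts to dimension).} Fix $t>s$. Then
\[
\mathbb E\Big[\sum_n N_n 2^{-nt}\mathbf 1_{\{\|\phi\|\leq L\}}\Big]<\infty,
\]
so almost surely $N_n 2^{-nt}\to 0$. Covering $Y$ by the $N_n$ bad cubes of diameter $\sim 2^{-n}$ then gives $\mathcal H^t(Y)=0$; in fact this bounds the upper box dimension. Hence $\dim_H Y\leq t$ almost surely. Taking a countable sequence $t\downarrow s$ yields the claim. If $s<0$, the same computation shows $N_n\to 0$ almost surely, so $Y=\emptyset$, which is consistent with the statement.

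\textbf{Main obstacle.} The delicate step is the covering bound for the tubular neighbourhood of $\Sigma_k$ in Step 2. We need the number of $\rho$-balls to be $\lesssim \rho^{-\dim\Sigma_k}$ uniformly in $\rho$, despite the singularities of $\Sigma_k$ along lower-rank strata. I would handle this by parametrizing: every $M\in\Sigma_k$ with $|M|\leq L$ factors as $M=AB$ with $A\in\mathbb R^{(d-j)\times k}$ and $B\in\mathbb R^{k\times d}$. One can arrange the rows of $B$ to be orthonormal, spanning the row space, and then $|A|\leq L$. This gives a Lipschitz image of a compact set, namely a bounded ball in $\mathbb R^{(d-j)k}$ times a Stiefel manifold, of total dimension
\[
(d-j)k + \Big(dk-\tfrac{k(k+1)}2\Big) \;\geq\; k(2d-j-k),
\]
which is too large by $k(k-1)/2$. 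To remove this excess, I would further quotient by $O(k)$, i.e. parametrize by the Grassmannian, which has dimension $k(d-k)$, together with a bounded $A$. This gives exactly $k(d-k)+k(d-j)=k(2d-j-k)$. Since the image of a compact Lipschitz parametrization of dimension $m$ is covered by $\lesssim\rho^{-m}$ balls of radius $\rho$, this closes Step 2.
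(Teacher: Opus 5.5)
Your proposal is correct and follows the paper's proof. The steps match: a pointwise bound $\mathbb P(\operatorname{dist}(D\phi(x),W_k\cap B_N)\le\delta)\lesssim\delta^{c_0}$ obtained from (b) by covering the $\delta$-neighbourhood of the rank-$\le k$ variety, then H\"older continuity to pass from cube centres to whole cubes, then an expected count $\lesssim 2^{m(d-c_0\alpha)}$ of bad dyadic cubes, and finally a Borel--Cantelli-type passage to $\mathcal H^t(Y)=0$ for every $t>d-c_0\alpha$. The differences are cosmetic: you justify the covering number by an explicit Lipschitz parametrization over the Grassmannian where the paper cites the finite Minkowski content of real-algebraic sets, and you use summability of $\sum_n N_n2^{-nt}$ instead of Chebyshev plus Borel--Cantelli.
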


Note that $(d-k)(d-j-k)$ is the codimension, in $\mathbb R^{(d-j)\times d}$ of the set of matrices with rank no larger than $k$.

We then use a purely deterministic argument in the second part of the proof.

\begin{lemma}\label{l:deterministic} Let $d$, $j$, and $k$ be as in Lemma \ref{l:probabilistic}.
Let $\phi \in C^{1,\alpha} (\mathbb T^d, \mathbb R^{d-j})$ be a function such that the Hausdorff dimension of 
the set $Y$ in \eqref{e:critical} is at most $\beta$. Then the set $\phi (Y)$ has Hausdorff dimension at most $\frac{\beta +\alpha k}{1+\alpha}$. 
\end{lemma}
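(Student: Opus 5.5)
Fix $\gamma>\beta$. I would cover $Y$ by balls $B_{r_i}(x_i)$ with centers $x_i\in Y$ and $\sum_i r_i^{\gamma}<\eta$, where $\eta$ is arbitrarily small. The aim is to show that each image $\phi(B_{r_i}(x_i)\cap Y)$ can be covered by roughly $N_i\lesssim (r_i/\rho_i)^{k}$ balls of radius $\rho_i = C r_i^{1+\alpha}$. Granting this, for $s=\frac{\gamma+\alpha k}{1+\alpha}$ the corresponding sum is
\[
\sum_i N_i \rho_i^{s}\lesssim \sum_i r_i^{-\alpha k} r_i^{(1+\alpha)s} = \sum_i r_i^{\gamma}<\eta,
\]
which gives $\mathcal H^s(\phi(Y))=0$. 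Since $\gamma>\beta$ is arbitrary, the claimed bound on the Hausdorff dimension follows.

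The local covering is a first-order Taylor argument. Fix $x_0\in Y$ and $r$ small, and write $L = D\phi(x_0)$, which has rank $\leq k$. For $x\in B_r(x_0)$ the $C^{1,\alpha}$ bound gives
\[
|\phi(x)-\phi(x_0)-L(x-x_0)|\leq [D\phi]_\alpha\, r^{1+\alpha}.
\]
Hence $\phi(B_r(x_0))$ lies in the $Cr^{1+\alpha}$-neighborhood of $\phi(x_0)+L(B_r)$. The set $L(B_r)$ is contained in a ball of radius $\|L\|r\le \|D\phi\|_\infty r$ inside a linear subspace of dimension $\le k$. A $k$-dimensional disc of radius $Cr$ can be covered by $\lesssim (r/r^{1+\alpha})^k = r^{-\alpha k}$ balls of radius $r^{1+\alpha}$. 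Enlarging these radii by a constant factor covers the whole tubular neighborhood. All constants depend only on $\|\phi\|_{C^{1,\alpha}}$ and the dimensions, which is uniform because $\mathbb T^d$ is compact.

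I would take care with two points. First, the centers must lie in $Y$, so that the linearization has rank $\le k$. This is harmless: any cover of $Y$ by sets $E_i$ with $E_i\cap Y\neq\emptyset$ can be replaced by balls of radius $\operatorname{diam}E_i$ centered at points of $Y$, at the cost of a constant. Second, I would restrict to small radii, $r_i<\delta$, which is allowed in the definition of Hausdorff measure. This makes $\rho_i$ small as well, so the image cover qualifies for $\mathcal H^s_{\delta'}$.

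I expect no real obstacle here. The only substantive point is to notice that the approximating affine image is $k$-dimensional, rather than $(d-j)$-dimensional, and that the error is $r^{1+\alpha}$. Balancing these two scales produces the exponent $\frac{\beta+\alpha k}{1+\alpha}$. If $k=0$ the argument simply says that each ball maps into a single ball of radius $r^{1+\alpha}$, giving dimension $\le \beta/(1+\alpha)$, which is consistent with the formula.
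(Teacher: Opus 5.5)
Your proposal is correct and follows essentially the same route as the paper. You cover $Y$ by balls centred at points of $Y$, use that $D\phi(x_i)$ has rank at most $k$ to trap $\phi(B_{r_i}(x_i))$ in an $O(r_i^{1+\alpha})$-neighbourhood of a $k$-dimensional disc of radius $O(r_i)$, cover that by $O(r_i^{-\alpha k})$ balls of radius $O(r_i^{1+\alpha})$, and balance exponents via $(1+\alpha)s-\alpha k=\gamma$. The only cosmetic difference is that you bound the full Taylor remainder $|\phi(x)-\phi(x_0)-L(x-x_0)|\le [D\phi]_\alpha r^{1+\alpha}$, whereas the paper uses the Lipschitz bound along $V_i$ and a mean-value estimate in the normal directions; the two are equivalent.
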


Before proving the lemma we show how Proposition \ref{p:no-triods} and Theorem \ref{t:classical-Morse-Sard} both follow from it.

\begin{proof}[Proof of Theorem \ref{t:classical-Morse-Sard}]
Let $\mathbb P$, $j$, and $\alpha$ be as in the statement and 
choose $k=d-j-1$. In this case $Y$ in \eqref{e:critical} is the set $Z$ of critical points of $\phi$, while $\phi (Z)$ is the set of critical values. But Lemma \ref{l:probabilistic} implies that the dimension of $Z$ is, almost surely, no larger than $d-(j+1)\alpha$, while Lemma \ref{l:deterministic} implies (by taking $\beta = d - (j+1) \alpha$) that $\phi (Z)$ has Hausdorff dimension no larger than $\frac{d- (j+1)\alpha + (d-j-1)\alpha}{1+\alpha}$. If the latter number is $<d-j$, then $\phi (Z)$ is a (Lebesgue) null set of $\mathbb R^{d-j}$. Therefore we have 
\[
\frac{d-(j+1) \alpha + (d-j-1)\alpha}{1+\alpha} < d-j
\]
if and only if $\alpha > \frac{j}{j+2}$.
\end{proof}

\begin{proof}[Proof of Proposition \ref{p:no-triods}]
Let $\mathbb P$ and $\alpha$ be as in Proposition \ref{p:no-triods}. Consider $j=1$, $k= d-3$ and let $Y$ be as in \eqref{e:critical}. Note that $Y$ is closed and so its complement $U$ is open. According to \cite[Proof of Lemma 2.16]{ABC-2} the set 
\[
\{y\in \mathbb R^{d-1} : \phi^{-1} (y)\cap U \mbox{ contains a triod}\}
\]
is a (Lebesgue) null set in $\mathbb R^{d-1}$. Therefore it suffices to show that 
\begin{itemize}
\item[(A)] $\mathbb P$-almost surely the set $\phi (Y)$ is a null set.
\end{itemize}
First observe that Lemma \ref{l:probabilistic} gives that, $\mathbb P$-almost surely, the dimension of $Y$ is at most $d-6\alpha$. Hence Lemma \ref{l:deterministic} gives that, $\mathbb P$-almost surely, the dimension of $\phi (Y)$ is at most $\frac{d-6\alpha + (d-3)\alpha}{\alpha+1}$. As in the previous proof (i.e. of Theorem \ref{t:classical-Morse-Sard}) we just need the inequality
\[
\frac{d-6\alpha + (d-3)\alpha}{\alpha+1} < d-1
\]
to conclude (A). The above inequality is equivalent to $\alpha > \frac{1}{8}$, thus completing the proof.
\end{proof}

\subsection{Proof of the probabilistic dimension bound (Lemma \ref{l:probabilistic})} In all the arguments, we always consider functions $\phi$ in the regularity class $C^{1,\alpha} (\mathbb{T}^d; \mathbb{R}^{d-j})$.

\medskip

{\bf Step 1.} We consider the Euclidean space $\mathbb R^{(d-j)\times d}$ of $(d-j)\times d$ real matrices and denote by $W_k$ the closed subset of matrices of rank at most $k$. Denote by $c_0 = (d-k)(d-j-k)$ its codimension. 
If we denote by ${\rm dim}_H (E)$ the Hausdorff dimension of a set $E$ and introduce the set 
\[
\mathcal{Y} := \{\phi : {\rm dim}_H ((D\phi)^{-1} (W_k))> d-c_0\alpha\}\, ,
\]
then we can rephrase the claim of the lemma as $\mathbb P (\mathcal{Y})=0$, where we regard $D \phi$ as a map from $\mathbb{T}^d$ to $\mathbb{R}^{(d-j) \times d}$. On the other hand $\mathcal{Y}$ is the countable union of
\begin{equation}\label{e:pieces}
\mathcal{Y}_N := \{\phi : \|\phi\|_{C^{1,\alpha}} \leq N 
\quad \mbox{and}\quad \mathcal{H}^{d-c_0 \alpha + 1/N} ((D\phi)^{-1} (W_k\cap B_N)) > 0\}\, ,
\end{equation}
where $B_N\subset \mathbb R^{(d-j)\times d}$ denotes the closed ball of radius $N\in \mathbb N$ (centered at the origin) and $\mathcal{H}^\beta$ is the $\beta$-dimensional Hausdorff measure. In particular it suffices to show that
\begin{equation}\label{e:bounded-claim}
\mathbb P (\mathcal{Y}_N)=0 \qquad \forall N\in \mathbb N\setminus \{0\}\, .
\end{equation}

\medskip

{\bf Step 2.} From now on we fix a positive natural number $N$ and aim at proving \eqref{e:bounded-claim}. First of all we recall that $W_k$ is a real-algebraic subset of $\mathbb R^{(d-j)\times d}$ of codimension $c_0$. Its intersection with the closed ball $B_N$ has, therefore, finite Minkowski $(d (d-j)-c_0)$-dimensional content. In other words there is a constant $C$ (depending on both $W_k$ and $N$ but not on $\delta$) such that, for every $\delta>0$, 
\[
|\{ A \in \mathbb{R}^{(d-j)\times d} : {\rm dist}\, (A, W_k \cap B_N)  \leq \delta\}|\leq C \delta^{c_0}\, .
\]
Consider now the cubical decomposition of $\mathbb R^{d(d-j)}$ in cubes $Q:= \delta M + [0, \delta)^{d(d-j)}$, for $M$ ranging in the lattice $\mathbb Z^{(d-j)\times d}$. The cubes are chosen so to be pairwise disjoint. We then let $\mathcal{Q}_\delta$ be the subcollection of such cubes which intersect the set $\{ A \in \mathbb{R}^{(d-j)\times d} :{\rm dist}\, (A, W_k\cap B_N) \leq \delta\}$. The cardinality of this collection is controlled by $C \delta^{-d(d-j)+c_0}$, where $C$ is another constant independent of $\delta$: this follows because the cubes are pairwise disjoint and their union is contained in $\{ A \in \mathbb{R}^{(d-j)\times d} :{\rm dist}\, (A, W_k \cap B_N) \leq \delta (1+\sqrt{d})\}$.

Observe next that, by assumption (b) of Theorem \ref{t:classical-Morse-Sard}, for any $\delta>0$ and any fixed $Q$ in $\mathcal{Q}_\delta$ (for $x \in \mathbb{T}^d$)
\[
\mathbb{P} (\{\phi: D\phi (x)\in Q\}) \leq C \delta^{d(d-j)} \, . 
\]
In particular, summing over all the cubes $Q$ in $\mathcal{Q}_\delta$, we conclude immediately the bound
\begin{equation}\label{e:first-probability-bound}
\mathbb{P} (\{\phi: {\rm dist}\, (D\phi (x), W_k\cap B_N) \leq \delta\}) \leq C \delta^{c_0}\, .
\end{equation}

\medskip

{\bf Step 3.} We next fix a natural number $m$, we subdivide $\mathbb T^d$ in $2^{m d}$ closed nonoverlapping cubes $Q$ of sidelength $2^{- m}$, and for each $\phi\in C^{1,\alpha}$ we count the number $n( m,\phi)$ of cubes in the subdivision which intersect $(D\phi)^{-1} ( W_k \cap B_N)$. Let us enumerate the cubes of the subdivision as $Q_i$ and note that
\begin{equation}\label{e:hits}
n( m,\phi) = \sum_{i=1}^{2^{m d}} \mathbf{1}_{E_i} (\phi)
\end{equation}
where $E_i$ is the set of $\phi\in C^{1,\alpha} (\mathbb{T}^d ; \mathbb{R}^{d-j})$ for which there is a point $y\in Q_i$ such that $D\phi (y)\in W_k\cap B_N$.

Note that, if $x_i$ is the center of the cube $Q_i$ and $\|D\phi\|_{C^{1,\alpha}}\leq N$, then 
\begin{equation} \label{e:holder}
|D\phi (y) - D\phi (x_i)|\leq N |x-y_i|^\alpha 
\leq N (\sqrt{d}\, 2^{- m-1})^\alpha\, .
\end{equation}
Thus, if we introduce the set
\[
F_i := \{\phi \in C^{1,\alpha} (\mathbb{T}^d; \mathbb{R}^{d-j}) : {\rm dist}\, (D\phi (x_i), W_k\cap B_N) \leq N (\sqrt{d}\, 2^{-m-1})^\alpha\}\, ,
\]
we infer from estimate \eqref{e:holder} that $E_i\cap \{|D\phi\|_{C^{1,\alpha}}\leq N\} \subset F_i$ and 
\begin{equation}\label{e:hits-2}
n (m, \phi) \mathbf{1}_{\{\|\phi\|_{C^{1,\alpha}}\leq N\}} (\phi) \leq \sum_{i=1}^{2^{m d}} \mathbf{1}_{F_i} (\phi)\, .
\end{equation}
By inequality \eqref{e:first-probability-bound}, by choosing $\delta = N (\sqrt{d}\, 2^{- m -1})^\alpha$ we have $\mathbb{P} (F_i) \leq C N^{c_0} 2^{-c_0 m \alpha}$ and therefore, integrating \eqref{e:hits-2} and by taking into account that there are $2^{m d}$ distinct cubes in the cubical decomposition of $\mathbb T^d$ considered above, we conclude immediately
\begin{equation}\label{e:expectation}
\int_{\|D\phi\|_{C^{1,\alpha}}\leq N} n(m, \phi)\, d\mathbb{P} (\phi) \leq \sum_{i=1}^{2^{m d}} \mathbb{P} (F_i) \leq C N^{c_0} 2^{(d-c_0 \alpha) m}\, .
\end{equation}
We next use Chebyshev's inequality for the function $n (m, \phi) \mathbf{1}_{\{\|\phi\|_{C^{1,\alpha}}\leq N\}}$ to estimate the probability of the event 
\[
\mathcal{Y}_{N, m} := \left\{\phi: \|\phi\|_{C^{1,\alpha}}\leq N \quad\mbox{and} \quad n (m, \phi) \geq  2^{(d-c_0\alpha+1/(2N)) m} \right\}\, 
\]
and get therefore
\begin{equation}\label{e:Borel-Cantelli}
\mathbb P (\mathcal{Y}_{N,m}) \leq 2^{-(d-c_0\alpha+1/(2N))m}\,  \mathbb{E} \big[ n (m, \phi) \mathbf{1}_{\{\|\phi\|_{C^{1,\alpha}}\leq N\}} (\phi) \big]\leq C N^{c_0} 2^{- m/(2N)}\, .
\end{equation}
By the Borel-Cantelli lemma, the set 
\[
\mathcal{Y}'_N := \bigcap_{j=1}^\infty \bigcup_{m \geq j} \mathcal{Y}_{N,m}
\]
is a set of probability zero. 

We now claim that $\mathcal{Y}_N\subset \mathcal{Y}'_N$, which would conclude the proof. In fact, assume $\phi \in \mathcal{Y}_N$. If $\phi$ is in the complement of $\mathcal{Y}'_N$ then necessarily there would be a sequence of natural numbers $m (i)\to\infty$ with the property that $\phi \not\in \mathcal{Y}_{N, m (i)}$ for every $i$. But, given the definition of $\mathcal{Y}_N$, $\|\phi\|_{C^{1,\alpha}}\leq N$. This means that, for every $i$, $n ( m (i), \phi) < 2^{(d-c_0\alpha+1/(2N)) m (i)}$. Therefore we can cover $(D\phi)^{-1} ( W_k \cap B_N)$ with less then $2^{(d-c_0\alpha+1/(2N)) m (i)}$ cubes of sidelength $2^{- m (i)}$. We then conclude, from the definition of the $\mathcal{H}^\beta_\infty$ premeasure, that 
\begin{equation}\label{e:final-bound}
\mathcal{H}^{d-c_0\alpha + N^{-1}}_\infty ((D\phi)^{-1} (W_k \cap B_N)) 
\leq \omega_{d-c_0\alpha+N^{-1}} 2^{- m (i)/(2N)} \left(\frac{\sqrt{d}}{2}\right)^{d-c_0\alpha+N^{-1}}\, .
\end{equation}
But letting $i\uparrow \infty$ we would then conclude $\mathcal{H}^{d-c_0\alpha+N^{-1}}_\infty ((D\phi)^{-1} (W_k \cap B_N)) =0 $. Since $\mathcal{H}^\beta_\infty$-null sets and $\mathcal{H}^\beta$-null sets coincide, we then would be in contradiction with the definition of $\mathcal{Y}_N$, which requires $\mathcal{H}^{d-c_0 \alpha+N^{-1}} ((D\phi)^{-1} (W_k \cap B_N))>0$.  Therefore we have $\mathcal{Y}_N\subset \mathcal{Y}'_N$, from which it follows that both $\mathbb{P} (\mathcal{Y}_N)$ and $\mathbb{P} (\mathcal{Y})$ are equal to zero.

\subsection{Proof of Lemma \ref{l:deterministic}} 
From the definition of Hausdorff measure it follows that for every positive $\varepsilon \leq 1$ and $\delta>0$ there is a cover of $Y$ with sets $U_{l}$ such that 
\begin{equation} \label{e:upper-bound-1}
\sum_{l} ({\rm diam}\, (U_{l}))^{\beta+\delta} < \varepsilon\, .
\end{equation}
We then discard any $U_{l}$ which does not intersect $Y$ and for the remaining ones we pick a point $x_i\in U_{l} \cap Y$. Set $r_i := {\rm diam}\, (U_i)$ and conclude that the closed balls $B_{r_i} (x_i)$ cover $Y$ and satisfy (by rewriting equation \eqref{e:upper-bound-1})
\begin{equation}\label{e:upper-bound-2}
\sum_l r_l^{\beta+\delta} < \varepsilon\, .
\end{equation}
Next we recall that $\phi$ is Lipschitz and thus $\phi (U_i) \subset B_{C_0 r_i} (\phi (x_i))$, where $C_0 = {\rm Lip}\, (\phi)$. Moreover, the rank of the linear map $L_i := D\phi (x_i)$ is at most $k$, as $x_i \in Y$: this allows us to choose a $k$-dimensional vector space $V_i$ with the property that ${\rm Im}\, (L_i) \subset V_i$. 

Select then any unit normal vector $\nu_i$ which is orthogonal to $V_i$ and observe that, by Lagrange's mean-value theorem, for every $y\in B_{r_i} (x_i)$ there is a point $\xi$ on the segment $[x_i, y]$ such that  
\[
\nu_i \cdot (\phi (y)-\phi (x_i)) = \nu_i \cdot (D\phi (\xi) \cdot (y-x_i))\, .
\]
Since $\nu_i \cdot (D\phi (x_i) \cdot (y-x_i)) = 0$ (due to the orthogonality of $\nu_i$ with respect to $V_i$), we then conclude that 
\begin{align*}
|\nu_i \cdot (\phi (y) - \phi (x_i))| &= 
|\nu_i \cdot ((D\phi (\xi)-D\phi (x_i))\cdot (y-x_i))|
\leq [D \phi]_{0, \alpha} |\xi - x_i|^\alpha |y-x_i|\\
& \leq [D \phi]_{0, \alpha} r_i^{1+\alpha}\, .
\end{align*}
In particular the orthogonal projection of $\phi (B_{r_i} (x_i))$ onto the orthogonal complement $V_i^\perp$ of $V_i$ is contained in a closed $(d-j-k)$-dimensional ball $\sigma_i$ of diameter bounded $2 [D \phi]_{0, \alpha} r_i^{1+\alpha}$ centered on the projection of $\phi (x_i)$ onto $V_i^\perp$. If we denote by $z_i$ the projection of $\phi (x_i)$ onto $V_i$, the arguments above prove the inclusion
\[
\phi (B_{r_i} (x_i)) \subset (B_{C_0 r_i} (z_i) \cap V_i) + \sigma_i =: W_i\, .
\]
On the other hand, since $B_{C_0 r_i} (z_i)\cap V_i$ is a closed $k$-dimensional disk, the set $W_i$ can be covered by $N_i \leq \bar C r_i^{-\alpha k}$ balls  (of $\mathbb R^{d-1}$) with radius $r_i^{1+\alpha}$, where $\bar C$ depends only on $C_0$, $[D \phi]_{0, \alpha}$, and $d$. The union over $i$ of all these balls covers therefore $\phi (Y)$ and, if we fix $\gamma>0$, we can write
\begin{align}
\mathcal{H}^\gamma_\infty (\phi (Y)) &\leq C \sum_i N_i (r_i^{1+\alpha})^\gamma \leq \bar C \sum_i r_i^{(1+\alpha)\gamma - \alpha k}\label{e:Hausdorff-bound}
\end{align}
Choose now $\gamma = \frac{\beta+\alpha k + \delta}{1+\alpha}$ so that 
\[
(1+\alpha) \gamma -\alpha k = \beta + \delta
\]
In particular we have, by inequality \eqref{e:upper-bound-2},
\[
\mathcal{H}^\gamma_\infty (\phi (Y)) \leq \bar C \varepsilon\, ,
\]
where the constant $\bar C$ depends on $\phi$, $\beta$, and $\delta$ but not on $\varepsilon$. In particular letting $\varepsilon \downarrow 0$ we conclude that 
\[
\mathcal{H}^{\frac{\beta+\delta+k\alpha}{1+\alpha}} (\phi (Y))=0
\]
On the other hand, given that $\delta>0$ is arbitrary, this proves that the Hausdorff dimension of $\phi (Y)$ is no larger than $\frac{\beta+k\alpha}{1+\alpha}$. 

\section{Consequences of the DiPerna-Lions theory}

As mentioned in the introduction, we collect here a number of consequences of the DiPerna-Lions theory. The focus is to show that a vector field which satisties the DiPerna-Lions renormalization property necessarily violates many of the expected ``laws'' of turbulent fluids. This seems to be folklore in the literature, but we believe it is useful to give precise statements and include the corresponding arguments. 

\subsection{From the renormalization property to the absence of dissipation anomalies}\label{s:no-dissipation}
We provide for the reader's convenience a quick derivation of the following proposition from statements which can be found in the literature on the transport equation. 

\begin{theorem}\label{p:no-dissipation}
Consider a bounded divergence-free vector field $v: \mathbb T^d \times [0,T] \to \mathbb R^d$ which satisfies the renormalization property . Let $\theta_{in}$ be a bounded initial datum and for any $\varepsilon>0$ we denote by $\theta_\varepsilon$ the unique bounded solution of equation \eqref{e:parabolic}. Then we have
\begin{equation}\label{e:no-dissipation}
\lim_{\varepsilon \downarrow 0}\, \varepsilon \int_0^T \int_{\mathbb T^d} |\nabla \theta_\varepsilon|^2 (x,t)\, dx\, dt = 0
\end{equation}
and $\theta_\varepsilon$ converges strongly in $C([0,T], L^2 (\mathbb{T}^d))$ to the unique bounded weak solution of \eqref{e:transport}.
\end{theorem}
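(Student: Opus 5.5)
The plan is the classical compactness argument combined with the energy identity. First, a priori bounds. By the maximum principle, $\|\theta_\varepsilon\|_{L^\infty} \le \|\theta_{in}\|_{L^\infty}$. Because $v$ is divergence-free, the energy equality
\[
\frac12\int_{\mathbb T^d} |\theta_\varepsilon(x,t)|^2\,dx + \varepsilon\int_0^t\int_{\mathbb T^d} |\nabla\theta_\varepsilon|^2\,dx\,ds = \frac12\int_{\mathbb T^d}|\theta_{in}|^2\,dx
\]
holds for every $t\in[0,T]$ and every $\varepsilon>0$. The identity is legitimate: $\theta_\varepsilon\in L^2(0,T;H^1)$ is the unique bounded (energy) solution, and $v\theta_\varepsilon\in L^\infty$.

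Next, compactness in time with values in a weak topology. Testing the equation against $\psi\in C^\infty(\mathbb T^d)$ gives
\[
\left|\frac{d}{dt}\int\theta_\varepsilon\psi\right| \le \|\theta_{in}\|_\infty\bigl(\|v\|_\infty\|\nabla\psi\|_{L^1} + \varepsilon\|\Delta\psi\|_{L^1}\bigr),
\]
so $\{\theta_\varepsilon\}$ is equicontinuous in $C([0,T], L^2_w)$ on the bounded ball. By Arzelà–Ascoli, every sequence $\varepsilon_n\downarrow 0$ has a subsequence with $\theta_{\varepsilon_n}\to\theta$ in $C([0,T],L^2_w)$. The limit $\theta$ is a bounded weak solution of \eqref{e:transport}, because $\varepsilon\int\theta_\varepsilon\Delta\varphi\to 0$ and the equation is linear.

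The renormalization property now gives two things. Taking $\beta(s)=s^2$, $\theta^2$ solves the transport equation. A divergence-free transport solution conserves its spatial integral, which follows by testing with $\varphi$ independent of $x$, after first checking weak continuity in time of the renormalized solution. Hence
\[
\int\theta(x,t)^2\,dx=\int\theta_{in}^2\,dx \quad \text{for every } t.
\]
Uniqueness of bounded weak solutions also follows from renormalization, by the standard DiPerna–Lions argument: the difference $\theta^1-\theta^2$ has zero data, and renormalizing with $\beta(s)=s^2$ shows $\int(\theta^1-\theta^2)^2$ is constant, hence zero. Uniqueness implies that the whole family converges, not only a subsequence.

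To conclude, note that weak lower semicontinuity combined with the energy equality gives, for each $t$,
\[
\int\theta^2(t) \le \liminf_{\varepsilon\downarrow 0}\int\theta_\varepsilon^2(t) \le \limsup_{\varepsilon\downarrow 0}\int\theta_\varepsilon^2(t) \le \int\theta_{in}^2 = \int\theta^2(t).
\]
So the norms converge, and weak convergence becomes strong $L^2$ convergence at each fixed $t$.

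Two points remain. For uniformity in $t$, suppose $t_n\to t$ and test convergence at the times $t_n$. Weak convergence $\theta_{\varepsilon_n}(t_n)\rightharpoonup\theta(t)$ holds by the uniform weak convergence together with weak continuity of $\theta$. The norm bound $\|\theta_{\varepsilon_n}(t_n)\|\le\|\theta_{in}\|=\|\theta(t)\|$ again upgrades this to strong convergence, which yields convergence in $C([0,T],L^2)$. For the dissipation, evaluate the energy equality at $t=T$:
\[
\varepsilon\int_0^T\!\!\int|\nabla\theta_\varepsilon|^2 = \tfrac12\bigl(\|\theta_{in}\|^2-\|\theta_\varepsilon(T)\|^2\bigr)\to\tfrac12\bigl(\|\theta_{in}\|^2-\|\theta(T)\|^2\bigr)=0.
\]
This is \eqref{e:no-dissipation}.

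I expect the main obstacle to be the bookkeeping that makes the renormalized solution $\theta^2$ continuous in time, so that $L^2$ conservation holds for every $t$ and not just a.e. I would handle this by taking the weakly continuous representative, with initial datum attained by testing against cutoffs in time, which is standard in DiPerna–Lions.
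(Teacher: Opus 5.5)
Your route is the same as the paper's: maximum principle, equi-Lipschitz pairings and Arzel\`a--Ascoli, identification of the limit by uniqueness, renormalization with $\beta(s)=s^2$, lower semicontinuity against the energy identity, then uniformity in $t$. Your self-contained uniqueness argument and the $t_n\to t$ upgrade are fine. The gap is at the point you flagged, and the remedy you propose does not close it.

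Testing the equation for $\theta^2$ with $\varphi=\varphi(t)$ gives $\int\theta^2(x,t)\,dx=\int\theta_{in}^2\,dx$ only for a.e.\ $t$. If you pass to the weakly continuous representative $g$ of $\theta^2$, then $\int g(x,t)\,dx$ is constant for every $t$. But $g(\cdot,t)=\theta(\cdot,t)^2$ is known only for a.e.\ $t$, where $\theta$ is your $C([0,T],L^2_w)$ limit. At an exceptional time, weak continuity of $\theta$ gives only $\|\theta(\cdot,t)\|_{L^2}\le\|\theta_{in}\|_{L^2}$. That is the inequality you already get from the energy identity, not the reverse one you need.

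Having $g(\cdot,t)=\theta(\cdot,t)^2$ for every $t$ is equivalent to strong continuity of $t\mapsto\theta(\cdot,t)$ in $L^2$. This is a genuinely additional property of renormalized solutions. The paper does not derive it from weak continuity; it imports it from the literature (\cite[Remark 25]{ambrosio}). You cannot do without it. The dissipation is evaluated at the fixed time $T$, so you need $\|\theta(\cdot,T)\|_{L^2}=\|\theta_{in}\|_{L^2}$ exactly at $t=T$. Your uniformity argument also uses this equality at every $t$.

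To repair it, either cite that fact or prove it by restarting the Cauchy problem. For $t_0<T$, weak continuity shows that $\theta$ restricted to $[t_0,T]$ is a bounded weak solution with datum $\theta(\cdot,t_0)$. Renormalizing this problem gives $\int\theta^2(x,t)\,dx=\int\theta^2(x,t_0)\,dx$ for a.e.\ $t>t_0$. Comparing with the a.e.\ conservation from time $0$ yields $\|\theta(\cdot,t_0)\|_{L^2}=\|\theta_{in}\|_{L^2}$.

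At $t_0=T$ there is no forward interval. You must either continue past $T$ (harmless for autonomous fields) or run the same argument on the time-reversed problem with field $-v(\cdot,T-t)$. Either way you need renormalization for Cauchy problems started at arbitrary times, or backward in time. This is the usual local form of the DiPerna--Lions property, slightly more than Definition~\ref{d:renormalization} literally states.
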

\begin{proof}
First of all the existence and uniqueness of the bounded distributional solution $\theta_0$ of \eqref{e:transport} is the main result of the DiPerna-Lions theory and the reader can consult e.g. \cite[Theorem 3.3 \& Proposition 3.6]{delellis}. The strong continuity of $t\mapsto \theta (\cdot, t)$ (up to a suitable choice of representative) is another well-known fact and the reader can consult e.g. \cite[Remark 25]{ambrosio}. By the maximum principle $\theta_\varepsilon$ is uniformly bounded in $L^\infty ((0,T); L^\infty (\mathbb{T}^d))$ and so, up to subsequence, it converges weakly-$*$ to some bounded function $\theta$. We immediately see (passing to the limit in the weak formulation \eqref{e:distributional}) that $\theta$ is a bounded distributional solution of equation \eqref{e:transport} and the aforementioned uniqueness implies that $\theta = \theta_0$. Fix a smooth test function $\varphi\in C^\infty (\mathbb T^d)$ and define $\Phi_\varepsilon (t) := \int_{\mathbb{T}^d} \varphi (x) \theta_\varepsilon (x,t) dx$. Multiplying equation \eqref{e:parabolic} with the function $\varphi (x)$ and integrating in space, we immediately obtain that 
\[
\frac{d}{dt} \Phi_\varepsilon (t) = \int_{\mathbb{T}^d} \theta_\varepsilon (x,t) \left[v (x,t) \cdot \nabla \varphi (x) + \varepsilon \Delta \varphi (x)\right]\, dx\, .
\]
In particular the functions $t\mapsto \Phi_\varepsilon (t)$ are equi-Lipschitz. By the Ascoli-Arzel\`a Theorem and the uniqueness (and time continuity) of the limit $\theta_0$, we conclude that 
\[
\lim_{\varepsilon \downarrow 0} \Phi_\varepsilon (t) = \int_{\mathbb{T}^d} \theta_0 (x,t) \varphi (x)\, dx\, .
\]
The arbitrariness of the test function $\varphi$ and the boundedness of $\theta_\varepsilon (\cdot, t)$ implies therefore that $\theta_\varepsilon (\cdot, t) \rightharpoonup \theta_0 (\cdot, t)$ in $L^2 (\mathbb T^d)$ for every $t \in [0,T]$. In particular, recalling that 
\begin{equation}\label{e:energy-identity}
\int_{\mathbb{T}^d} \theta^2_\varepsilon (x,T)\, dx = \int_{\mathbb{T}^d} \theta_{in}^2 (x)\, dx 
- 2\varepsilon \int_0^T \int_{\mathbb{T}^d} |\nabla \theta_\varepsilon|^2 (x,t)\, dx\, dt 
\end{equation}
and using the lower semicontinuity of the $L^2$ norm under weak convergence, we conclude
\begin{equation}\label{e:chiave}
\int_{\mathbb{T}^d} \theta_0^2 (x,T) \, dx \leq \liminf_{\varepsilon \downarrow 0} \int_{\mathbb{T}^d} \theta_\varepsilon^2 (x,T)\, dx 
\leq \int_{\mathbb{T}^d} \theta_{in}^2 (x)\, dx\, .
\end{equation}
However, the renormalization property gives that $\theta_0^2$ is the unique bounded weak solution of \eqref{e:transport} with initial data $\theta_{in}^2$ and testing the equation against functions of the type $\varphi (x,t) = \varphi (t)$ we conclude that $\|\theta_0 (\cdot, t)\|_{L^2}^2$ is constant. So the inequalities in \eqref{e:chiave} are actually equalities and in fact $\theta_\varepsilon (\cdot, T)$ converges strongly in $L^2 (\mathbb{T}^d)$ to $\theta (\cdot, T)$. Coupling this information with \eqref{e:energy-identity} we conclude \eqref{e:no-dissipation}. Note also that the argument immediately implies that $\theta_\varepsilon (\cdot, t)$ converges strongly to $\theta_0 (\cdot, t)$ in $L^2 (\mathbb{T}^d)$ for every $t \in [0,T]$. To upgrade pointwise convergence to uniform convergence we observe that we already know the uniform convergence in $L^2 (\mathbb{T}^d)$ endowed with the weak topology and that the functions $t\mapsto \|\theta_\varepsilon (\cdot, t)\|_{L^2}$ are converging to $t\mapsto \|\theta_0 (\cdot, t)\|_{L^2} \equiv \|\theta_{in}\|_{L^2}$ uniformly.
\end{proof}

\subsection{Anomalous regularization}\label{s:Yaglom} A widely accepted theory in the turbulence literature is that there is, in some statistical sense, a relation between the regularity of the advecting vector field $v$ and the passive scalar $\theta_\varepsilon$ in \eqref{e:parabolic}. Roughly speaking a $C^\alpha$ regularity for $v$ should correspond to a uniform $C^\beta$ regularity of $\theta_\varepsilon$ for $\beta$ satisfying $\alpha + 2\beta =1$. Some authors in the literature call this ``anomalous regularization''. More generally the ``law'' is known as Obukhov-Corrsin relation or the Yaglom law. We will discuss the Yaglom law in Section \ref{s:Yaglom-2}. Here we will show that anomalous regularization does not occur, if the random vector field $v$ satisfies the assumptions of Theorem \ref{t:no-anomaly-2} and Theorem \ref{t:no-anomaly-general}. 

\begin{proposition}\label{p:no-regularization}
Let $v$ be an autonomous divergence-free vector field which satisfies the conditions of Theorems \ref{t:no-anomaly-2} or \ref{t:no-anomaly-general}. Then there exist initial data $\theta_{in} \in L^\infty (\mathbb{T}^d)$ such that the corresponding unique solution $\theta$ to the transport equation \eqref{e:transport} has the property that $\theta (\cdot, t) \notin C^\beta (\mathbb{T}^d)$ for any $\beta > 0$ and $t \in [0,T]$. Moreover, for any sequence of solutions $\{ \theta_\epsilon \}$ to the advection-diffusion equation \eqref{eq:advection_diffusion} such that $\epsilon \rightarrow 0$, one has for any $\beta > 0$ and $t \in [0,T]$
\begin{equation}
\lim_{\epsilon \rightarrow 0} \lVert \theta_\epsilon (\cdot, t) \rVert_{C^\beta (\mathbb{T}^d)} = \infty.
\end{equation}
\end{proposition}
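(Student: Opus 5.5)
We argue $\mathbb P$-almost surely. By Theorem \ref{t:no-anomaly-2} or Theorem \ref{t:no-anomaly-general}, $v$ then has the DiPerna-Lions renormalization property. The idea is to take as initial datum a characteristic function and use renormalization to show that the solution stays a characteristic function of a set of fixed measure. Such a function can never agree a.e. with a continuous function on the connected torus, and this rules out any H\"older regularity. Concretely, fix a measurable set $E\subset\mathbb T^d$ with $0<|E|<1$ (say a half of the torus) and set $\theta_{in}=\mathbf 1_E$. Let $\theta$ be the unique bounded weak solution of \eqref{e:transport} from Theorem \ref{p:no-dissipation}. We use its representative for which $t\mapsto\theta(\cdot,t)$ is strongly continuous in $L^2$.

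\emph{Step 1: $\theta(\cdot,t)$ is a characteristic function of a set of measure $|E|$.} Apply the renormalization property with $\beta(s)=s^2-s$. Then $\theta^2-\theta$ is a bounded weak solution with initial datum $\mathbf 1_E^2-\mathbf 1_E=0$. The zero function is also such a solution, so by the uniqueness in the DiPerna-Lions theory $\theta^2=\theta$ a.e. By time continuity this holds for every $t$, hence $\theta(\cdot,t)=\mathbf 1_{E_t}$ for a measurable $E_t$. Testing \eqref{e:distributional} with functions $\varphi(x,t)=\varphi(t)$ shows that $\int\theta(\cdot,t)\,dx=|E|$, so $|E_t|=|E|\in(0,1)$.

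\emph{Step 2: no continuous representative.} Suppose $f\in C^0(\mathbb T^d)$ equals $\mathbf 1_{E_t}$ a.e. Then $f^{-1}(\mathbb R\setminus\{0,1\})$ is an open Lebesgue-null set, hence empty. So $f$ is continuous with values in $\{0,1\}$ on the connected space $\mathbb T^d$, and therefore constant. This contradicts $0<|E_t|<1$. Consequently $\theta(\cdot,t)\notin C^\beta$ for every $\beta>0$ and every $t\in[0,T]$; for $t=0$ this is just the statement for $\mathbf 1_E$ itself.

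\emph{Step 3: blow-up of H\"older norms.} Let $\theta_\varepsilon$ solve \eqref{e:parabolic} with this datum. Suppose that along some sequence $\varepsilon_n\to0$ we had $\sup_n\|\theta_{\varepsilon_n}(\cdot,t)\|_{C^\beta}<\infty$. By Arzel\`a-Ascoli, a further subsequence converges uniformly to some continuous $f$. On the other hand, Theorem \ref{p:no-dissipation} gives $\theta_{\varepsilon}(\cdot,t)\to\theta(\cdot,t)$ strongly in $L^2$. Hence $f=\theta(\cdot,t)$ a.e., contradicting Step 2. Therefore $\lim_{\varepsilon\to0}\|\theta_\varepsilon(\cdot,t)\|_{C^\beta}=\infty$; at $t=0$ the norm is already infinite.

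The only step that needs care is Step 1. It must be done at every fixed $t$, not merely for a.e. $t$, which is why we work with the time-continuous representative supplied by Theorem \ref{p:no-dissipation}. It also relies on the uniqueness part of the DiPerna-Lions theory, which identifies $\theta^2-\theta$ with the zero solution. Everything else is elementary.
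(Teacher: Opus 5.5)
Your proof is correct and follows essentially the same route as the paper: take a discretely valued initial datum, show the solution remains discretely valued with the same distribution, and then use the strong $L^2$ convergence from Theorem \ref{p:no-dissipation} to rule out uniform H\"older bounds. The differences are only in execution, and they make your version more self-contained. You get $\theta^2=\theta$ directly from renormalization with $\beta(s)=s^2-s$ plus uniqueness, instead of invoking the regular Lagrangian flow. You replace the paper's informal claim that $\theta(\cdot,t)$ is discontinuous on a set of positive $\mathcal{H}^1$ measure with a clean connectedness argument. And you use Arzel\`a--Ascoli where the paper appeals to interpolation.
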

\begin{proof}
Choose $\theta_{in}$ to be a bounded function which takes a discrete set of values (more than one). Then the solution $\theta$ to the transport equation \eqref{e:transport} is constant along the trajectories of the unique regular Lagrangian flow $\Phi$ of the vector field $v$. Therefore the distribution function of $a\mapsto |\{\theta > a\}|$ will remain exactly the same. Therefore $\theta (\cdot, t)$ will be forced to remain discontinuous on a set of positive $\mathcal{H}^1$ measure. This immediately obstructs any regularization mechanism and thus prevents the tracer $\theta$ from becoming Hölder continuous.

In order to prove the second part, it was shown in Theorem \ref{p:no-dissipation}, that when the conclusion of Theorem \ref{t:no-anomaly-2} or \ref{t:no-anomaly-general} applies and the initial data $\theta_{in}$ are bounded, the solutions $\theta_\varepsilon$ of \eqref{e:parabolic} converge (strongly in $C ([0,T], L^2 (\mathbb{T}^d))$) to the corresponding unique 
solution $\theta$ of \eqref{e:transport}. If $\lVert \theta_\epsilon (\cdot, t) \rVert_{C^\beta (\mathbb{T}^d)}$ would be bounded uniformly in $\epsilon$ for some positive $\beta$ and $t \in [0,T]$, it would mean by an interpolation argument that $\theta (\cdot, t)$ would be in $C^{\beta-} (\mathbb{T}^d)$ which contradicts the previous result.
\end{proof}

\begin{remark}
As pointed out to us by Lucio Galeati, it is also obvious that, for a vector field as in Proposition \ref{p:no-regularization}, for any fixed positive time $t$ and any space of functions $X$ which embeds in $L^\infty (\mathbb{T}^d)$ but it is strictly contained in it, we can find an $L^\infty (\mathbb{T}^d)$ initial data $\theta_{in}$ with the property that the unique solution $\theta$ of the transport equation satisfies $\theta (\cdot, t)\not \in X$ and, consequently, $\|\theta_\varepsilon (\cdot, t)\|_X \to \infty$ as $\varepsilon\downarrow 0$. It suffices in fact to fix an element $\theta_{ter} \in L^\infty (\mathbb{T}^d)\setminus X$, solve the transport equation backward in time with data $\theta (\cdot, t) = \theta_{ter}$ and set $\theta_{in} = \theta (\cdot, 0)$. Note however that the conclusion of Proposition \ref{p:no-regularization} is stronger in another respect, namely that $\theta (\cdot, t')\not \in C^\beta (\mathbb{T}^d)$ for {\em every time} $t' \in [0,t]$.  
\end{remark}

\subsection{Richardson dispersion}\label{s:no-richardson} In this section we show why the DiPerna-Lions renormalization property obstructs the occurrence of Richardson dispersion. First, for every $\varepsilon$ we consider the stochastic ODE
\begin{equation}\label{e:sODE}
\left\{
\begin{array}{l}
dX_t^\varepsilon (x) = v (X_t^\varepsilon (x)) dt + \sqrt{2\varepsilon} dB_t\\ \\
X_0^\varepsilon (x) =x 
\end{array}\right.
\end{equation}
where $B_t$ is the standard $d$-dimensional Brownian motion. If $v$ is a turbulent vector field, Richardson dispersion is the prediction that the variance of $|X^\varepsilon_t (x) - X^\varepsilon_t (y)|$ is, in the limit as $\varepsilon\downarrow 0$, of size $t^3$, independently of how close $x$ and $y$ are. We propose here a weaker form of the above property.

\begin{definition}\label{d:Richardson-dispersion}
Denote by $d$ the geodesic distance on the torus. We say that $v$ supports  weak Richardson dispersion if there is a positive number $\delta > 0$ and a set $E$ of positive measure with the property that
\[
\liminf_{\varepsilon\downarrow 0} \mathbb E[ d (X_1^\varepsilon (x), X_1^\varepsilon (y))^2] > \delta \quad\mbox{for all $(x,y) \in E\times E$.}
\]
\end{definition}

In fact the property of Definition \ref{d:Richardson-dispersion} is called {\em spontanteous stochasticity} by some authors. The next proposition clearly obstructs this weaker form. It is in fact possible to prove this obstruction directly, appealing to the Fluctuation-Dissipation relation as in \cite{Drivas} (cf. also \cite{JohanssonSorella2023}). However the following statement gives a stronger property, as it shows directly that the stochastic flows converge to deterministic (regular Lagrangian) flow as $\varepsilon \downarrow 0$.

\begin{proposition}\label{p:no-Richardson}
Assume $v$ is a (not necessarily autonomous) divergence-free vector field which satisfies the DiPerna-Lions property.  Let $X^\varepsilon_t (x)$ be as in Definition \ref{d:Richardson-dispersion} and let $X_t (x)$ be the unique regular Lagrangian flow of $v$. Then, for every positive $t$, 
\[
\lim_{\varepsilon\downarrow 0} \int_{\mathbb T^d} {\mathbb E} [ d (X^\varepsilon_t (x), X_t (x))^2]\, dx = 0\, ,
\]
where $d (y,z)$ denotes the (geodesic) distance between the points $y$ and $z$ on $\mathbb T^d$ (endowed with the flat metric). In particular $v$ cannot support Richardson dispersion.
\end{proposition}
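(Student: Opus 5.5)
The plan is a duality/energy argument that avoids any Lagrangian stability estimate and uses only Theorem \ref{p:no-dissipation} together with the Feynman--Kac representation of the advection--diffusion equation. Fix $t>0$ and a test function $\psi\in C^\infty(\mathbb T^d)$. The law of $X^\varepsilon_t(x)$ is tied to the solution of a backward parabolic equation. It is convenient to reverse time: let $u_\varepsilon(\cdot,s)$ solve $\partial_s u_\varepsilon - (v(\cdot,t-s)\cdot\nabla) u_\varepsilon = \varepsilon\Delta u_\varepsilon$ with $u_\varepsilon(\cdot,0)=\psi$. This is \eqref{e:parabolic} for the divergence-free field $-v(\cdot,t-s)$. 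Note that the renormalization property is invariant under time reversal and sign change of $v$, since the weak formulation is symmetric. By Itô's formula,
\[
\mathbb E[\psi(X^\varepsilon_t(x))]=u_\varepsilon(x,t).
\]
Theorem \ref{p:no-dissipation} then gives $u_\varepsilon(\cdot,t)\to u_0(\cdot,t)$ strongly in $L^2$, where $u_0$ is the unique bounded transport solution. By the DiPerna--Lions theory, $u_0(x,t)=\psi(X_t(x))$ a.e., where $X$ is the regular Lagrangian flow. The same statement holds for $\psi^2$ in place of $\psi$.

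Next I expand the variance-type quantity
\begin{align*}
\int \mathbb E\big[|\psi(X^\varepsilon_t(x))-\psi(X_t(x))|^2\big]dx
&=\int \mathbb E[\psi^2(X^\varepsilon_t(x))]\,dx-2\int \mathbb E[\psi(X^\varepsilon_t(x))]\,\psi(X_t(x))\,dx\\
&\quad+\int \psi^2(X_t(x))\,dx .
\end{align*}
The first term converges to $\int\psi^2(X_t)$ by the previous paragraph applied to $\psi^2$. The second term converges to $-2\int\psi^2(X_t)$ by the strong $L^2$ convergence of $u_\varepsilon(\cdot,t)$ paired with the bounded function $\psi\circ X_t$. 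Hence the whole expression tends to $0$.

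To pass from test functions to the distance, I apply this to the finitely many smooth functions $\psi_i$ forming a smooth embedding $\Psi=(\psi_1,\dots,\psi_{2d})$ of $\mathbb T^d$ into $\mathbb R^{2d}$ via sines and cosines. Because the torus is compact and $\Psi$ is an embedding, there is $c>0$ with $|\Psi(y)-\Psi(z)|\geq c\,d(y,z)$. Summing the bounds over $i$ then yields $\int\mathbb E[d(X^\varepsilon_t,X_t)^2]\,dx\to 0$.

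The final claim follows from this convergence. Suppose weak Richardson dispersion held. Then $L^1(dx\otimes\mathbb P)$ convergence along a subsequence makes both $d(X^\varepsilon_1(x),X_1(x))$ and the corresponding quantity at $y$ small in expectation for a.e. $x,y$. However, $X_1$ is deterministic, so $\mathbb E[d(X^\varepsilon_1(x),X^\varepsilon_1(y))^2]\to d(X_1(x),X_1(y))^2$. This limit is small for pairs in $E\times E$ close to the diagonal in the image: since $X_1$ is measure preserving, Lebesgue density points of $E$ give pairs $x\neq y$ in $E$ with $X_1(x),X_1(y)$ arbitrarily close, which contradicts the requirement of a uniform $\delta$.

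The main obstacle is the justification that Theorem \ref{p:no-dissipation} applies to the time-reversed field and that Feynman--Kac holds for merely bounded, possibly discontinuous $v$. The SDE \eqref{e:sODE} needs a well-defined solution (for bounded drift, Veretennikov's strong uniqueness applies). For Feynman--Kac I would mollify $v$, use the classical formula, and pass to the limit using stability of both the SDE law and the parabolic solutions, since $\varepsilon$ is fixed and positive.
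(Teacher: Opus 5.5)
Your proof is correct and follows the paper's argument in its essential steps. Both use the Feynman--Kac representation for the backward (time-reversed) equation, the strong $L^2$ convergence from Theorem \ref{p:no-dissipation}, the identification of the limit as $f\circ X_t$ via the DiPerna--Lions theory, and the expansion of the square using the data $f$ and $f^2$. The one difference is a convenient simplification: the paper lifts to $\mathbb R^d$ and works with the truncated data $\mathbf{1}_{B_R}$, $x\mathbf{1}_{B_R}$, $|x|^2\mathbf{1}_{B_R}$, which requires an $\mathbb R^d$ version of Theorem \ref{p:no-dissipation} and an extra escape estimate, whereas your smooth periodic embedding $\Psi$ of $\mathbb T^d$ into $\mathbb R^{2d}$ stays on the torus and removes both.
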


\begin{remark}
Some authors adopt, as definition of Richardson dispersion, the statement that the variance of the one-point distribution $X^\varepsilon_t (x)$ remains strictly positive as $\varepsilon\downarrow 0$ for a nontrivial set of initial points $x$. Clearly Proposition \ref{p:no-Richardson} rules this out as well for any $v$ which satisfies the DiPerna-Lions renormalization property. 
\end{remark}

\begin{proof} Given that 
\[
{\mathbb E} [ d (X^\varepsilon_t (x), X_t^\varepsilon (y))^2]
\leq {\mathbb E} [ d (X^\varepsilon_t (x), X_t (x))^2] + {\mathbb E} [ d (X^\varepsilon_t (y), X_t (y))^2]
+ d (X_t (x), X_t (y))^2\, ,
\]
the main statement of the proposition and the usual Lusin property of measurable maps implies immediately the conclusion that $v$ cannot support Richardson dispersion.

Regard $\mathbb T^d$ as $\mathbb R^d/\mathbb Z^d$ and consider the vector field $v$ as a periodic vector field on $\mathbb R^d$. Both the solution of the stochastic ODE $X^\varepsilon_t$ and the regular Lagrangian flow $X_t$ in $\mathbb T^d$ can then be derived obviously from the corresponding objects in $\mathbb R^d$, which we denote by $Y_T^\varepsilon$ and $Y_T$. Hence, in this context it suffices to show that 
\begin{equation}\label{e:target}
\lim_{\varepsilon\downarrow 0} \int_{\Omega} {\mathbb E} [\min\{ |Y_T^\varepsilon (x) - Y_T (x)|^2, \bar C\}]
\, dx = 0
\end{equation}
for every bounded $\Omega$, every positive time $T$, and the geometric constant $\bar C = ({\rm diam}\, (\mathbb T^d))^2$. In fact if $\Omega$ contains $[0,1]^d$, then we have the inequality $d (X_T^\varepsilon (x), X_T (x))^2 \leq \min \{ |Y_T^\varepsilon (x) - Y_T (x)|^2, \bar C\}$ for all $x\in [0,1]^d \subset \Omega$ and the desired conclusion follows immediately.

\medskip

We next observe that the conclusions about the strong convergence of solutions to the parabolic equation \eqref{e:parabolic} to the unique solution of the transport equation, apply in the context of $\mathbb R^d$ as well if we assume that the initial data is in $L^2\cap L^\infty$. In fact the argument of Theorem \ref{p:no-dissipation} applies verbatim. 

\medskip

Consider now a fixed time $T$ and the parabolic equation solved {\em backward} in time on $(-\infty, T]$
\begin{equation}\label{e:backward-parabolic}
\left\{
\begin{array}{ll}
\partial_t \theta_\varepsilon + (v\cdot \nabla) \theta_\varepsilon = - \varepsilon \Delta \theta_\varepsilon\\ \\
\theta_\varepsilon (\cdot, T) = f
\end{array}
\right.
\end{equation}
together with the transport equation (solved backward in time as well)
\begin{equation}\label{e:backward-transport}
\left\{
\begin{array}{ll}
\partial_t \theta + (v\cdot \nabla) \theta = 0 \\ \\
\theta (\cdot, T) = f\, .
\end{array}
\right.
\end{equation}
If $f \in L^2\cap L^\infty$, the arguments of Theorem \ref{p:no-dissipation} imply that 
\begin{itemize}
\item[(i)] $\|\theta_\varepsilon (\cdot, 0) - \theta (\cdot, 0)\|_{L^2}$ converges to $0$ as $\varepsilon\downarrow 0$.
\end{itemize}
On the other hand we also know that 
\begin{itemize}
\item[(ii)] $\theta (x,0) = f (Y_T (x))$ for a.e. $x$ by the DiPerna-Lions theory;
\item[(iii)] $\theta_\varepsilon (x,0) = {\mathbb E} [ f (Y_T^\varepsilon (x))]$ by the Feynman-Kac formula. 
\end{itemize}
Since $\Omega$ is bounded and $Y_T$ is the regular Lagrangian flow of the bounded vector field $v$, if we choose $R$ large enough we have $Y_T (\Omega) \subset B_R$. Consider now $f := \mathbf{1}_{B_R}$ and the corresponding solutions of \eqref{e:backward-parabolic} and \eqref{e:backward-transport}. Then $\theta_\varepsilon (\cdot, 0)$ converges strongly in $L^2$ to $\theta (\cdot, 0)$, which is identically equal to $1$ on $\Omega$ because of (ii) and $Y_T (\Omega) \subset B_R$. But then $1-\theta_\varepsilon (\cdot, 0)$ converges strongly in $L^2 (\Omega)$ to $0$. Using the Feynman-Kac formula, this implies
\begin{equation}\label{e:escaping}
\lim_{\varepsilon \downarrow 0} \int_\Omega \mathbb E[1-\mathbf{1}_{B_R} (Y_T^\varepsilon (x))]\, dx = 0\, .
\end{equation}
Clearly we have 
\begin{align*}
|Y^\varepsilon_T (x) - Y_T (x)|^2
\leq & 2 |Y^\varepsilon_T (x) \mathbf{1}_{B_R} (Y^\varepsilon_T (x)) - Y_T (x)|^2
+ 2 |Y^\varepsilon_T (x)|^2 (1-\mathbf{1}_{B_R} (Y^\varepsilon_T (x)))\, , 
\end{align*}
which in turn implies 
\begin{align*}
\min \{|Y^\varepsilon_T (x) - Y_T (x)|^2, \bar C\}
\leq 2  |Y^\varepsilon_T (x) \mathbf{1}_{B_R} (Y^\varepsilon_T (x)) - Y_T (x)|^2
+ \bar C (1-\mathbf{1}_{B_R} (Y^\varepsilon_T (x))\, .
\end{align*}
In particular, from \eqref{e:escaping} we conclude that, in order to show \eqref{e:target}, it suffices to show that the integral
\begin{align*}
&\int_\Omega {\mathbb E} [|Y_T^\varepsilon (x) \mathbf{1}_{B_R} (Y_T^\varepsilon (x)) - Y_T (x) |^2]\, dx \\
= & \underbrace{\int_\Omega {\mathbb E} [|Y_T^\varepsilon (x) \mathbf{1}_{B_R} (Y_T^\varepsilon (x)) - Y_T (x)\mathbf{1}_{B_R} (Y_T (x)) |^2]\, dx}_{=: I (\varepsilon)}
\end{align*}
vanishes as $\varepsilon\downarrow 0$. 

Expanding the square and using linearity of the expectation we can write 
\begin{align*}
I (\varepsilon) &= \int_\Omega {\mathbb E} [|Y_T^\varepsilon (x)|^2 \mathbf{1}_{B_R} (Y_T^\varepsilon (x))]\, dx\\
& - 2 \int_\Omega Y_T (x) \mathbf{1}_{B_R} (Y_T (x)) \cdot {\mathbb E} [Y_T^\varepsilon (x) \mathbf{1}_{B_R} (Y_T^\varepsilon (x))]\, dx\\
&+ \int_\Omega |Y_T (x)|^2\mathbf{1}_{B_R} (Y_T (x)) \, dx\, . 
\end{align*}
Consider now the equation \eqref{e:backward-parabolic} with $f (x) = |x|^2 \mathbf{1}_{B_R} (x)$. Its solution is then given, at time $0$, exactly by the integrand of the first integral in the expression above. For $\varepsilon\downarrow 0$ we know that this converges, strongly in $L^2 (\mathbb R^d)$, to the unique solution, at time $0$, of \eqref{e:backward-transport} with the same initial data. This is then $|Y_T (x)|^2 \mathbf{1}_{B_R} (Y_T (x))$, namely the integrand of the third integral. But then, since $\Omega$ is bounded, the first integral converges, as $\varepsilon\downarrow 0$, to the third integral. 

On the other hand for the second integral we can consider the (vector-valued) initial data $f (x) = x \mathbf{1}_{B_R} (x)$ and again argue that ${\mathbb E}  [Y_T^\varepsilon (x) \mathbf{1}_{B_R} (Y_T^\varepsilon (x))]$ is the solution of the corresponding backward parabolic equation at time $0$. Again the latter converges strongly in $L^2 (\mathbb R^d)$ to the solution of the backward transport equation at time $0$, which is $Y_T (x) \mathbf{1}_{B_R} (x)$. So the second integral is also converging to the third integral. Given however the factor $-2$ in front of it, in the limit as $\varepsilon\downarrow 0$ the three expression cancel out, showing that $I (\varepsilon)\downarrow 0$. 
\end{proof}

\subsection{Yaglom's law}\label{s:Yaglom-2}
We show that, as a corollary of our results on sufficient conditions for the renormalization property, that (a possible interpretation of) the Yaglom law cannot hold under the assumptions of Theorem \ref{t:no-anomaly-2} and Theorem \ref{t:no-anomaly-3}. The Yaglom law stems from \cite{Yaglom1949} and can be interpreted or generalized in several ways. It is pertaining to classical turbulence and as such only discusses the exponent $\alpha=1/3$ and actually the behavior of a special statistical ensemble related to that exponent. In the discussion above we tied it together with the Obukhov-Corrsin relation about the balance of H\"older continuity of the vector field and that of the tracer. Our remark here addresses more directly the statistical ensemble from \cite{Yaglom1949}. It  is in a similar spirit as the works \cite{DuchonRobert2000,Eyink2003}, who provided a rigorous derivation of the Kolmogorov-$4/3$ and $4/5$ laws respectively, under the assumption of the convergence of certain spherical averages. This convergence assumption was later removed in \cite{Novack2024}. A related scaling law for the helicity was established in \cite{BoutrosTiti2025}. 

We first introduce some useful notation. For any $\xi \in \mathbb{R}^d$ we will use
\begin{equation*}
\delta f (\xi ; x, t) \coloneqq f(x + \xi, t) - f(x,t)
\end{equation*}
for the increments in the direction $\xi$.
We then define the spherical average
\begin{equation}
S(\theta,v, r) \coloneqq \int_{\mathbb{S}^{d-1}} \xi \cdot \delta v (r \xi; x,t) \lvert \delta \theta (r \xi; x,t) \rvert^2 \, d \xi,
\end{equation}
which will be our interpretation of the ensemble average in Yaglom's law (following \cite{DuchonRobert2000,Eyink2003}). Note that, when $v$ has the renormalization property, the spherical averages $S (\theta_\varepsilon, v,r)$ for the solutions $\theta_\varepsilon$ of \eqref{e:parabolic} converge to the spherical averages $S(\theta,v,r)$ for the unique bounded solution of \eqref{e:transport}.

The following theorem establishes the failure of the Yaglom law in the corresponding interpretation, under the condition that $v$ satisfies the renormalization property (and thus almost surely for the random vector fields of Theorem \ref{t:no-anomaly-2} or Theorem \ref{t:no-anomaly-3}.

\begin{theorem}\label{t:Yaglom-violated}
Assume $v$ is a (not necessarily autonomous) divergence-free vector field which satisfies the DiPerna-Lions renormalization property.
Then, if $\theta$ is the (unique) bounded solution of the transport equation \eqref{e:transport} with initial data $\theta_{in} \in L^\infty (\mathbb{T}^d)$, we have
\begin{equation}
\frac{S(\theta,v, \epsilon)}{\epsilon} \xrightarrow[]{\epsilon \rightarrow 0} 0 \, 
\end{equation}
in the sense of distributions.
\end{theorem}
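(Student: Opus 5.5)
The plan is to follow the Duchon--Robert derivation of the local energy balance. For the mollified transport equation it produces a distributional identity that expresses a defect measure in terms of $S(\theta,v,\epsilon)/\epsilon$ and lower order terms. Renormalization forces that defect to vanish.

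Fix a radial mollifier $\rho$ and set $\rho_\epsilon(\xi)=\epsilon^{-d}\rho(\xi/\epsilon)$ and $\theta^\epsilon=\theta*\rho_\epsilon$. Since $\theta$ solves \eqref{e:transport} and $v$ is divergence-free, in the sense of distributions we have
\[
\partial_t \theta^\epsilon + \operatorname{div}(v\theta^\epsilon) = \operatorname{div}\big(v\theta^\epsilon - (v\theta)*\rho_\epsilon\big).
\]
Multiply by $\theta$, multiply the weak equation for $\theta$ by $\theta^\epsilon$, and add (this is justified by mollifying once more in space and time, as in Duchon--Robert). Writing the commutator through increments gives the identity
\[
\partial_t(\theta\theta^\epsilon) + \operatorname{div}\big(v\,\theta\theta^\epsilon\big) + \operatorname{div}(\text{flux}_\epsilon) = D_\epsilon(\theta,v),
\qquad
D_\epsilon(\theta,v)=\frac12\int \nabla\rho_\epsilon(\xi)\cdot\delta v(\xi;x,t)\,|\delta\theta(\xi;x,t)|^2\,d\xi .
\]
Here the flux term has the form $\int\rho_\epsilon(\xi)\,\delta v\,|\delta\theta|^2\,d\xi$ plus terms of the same type, and is bounded in $L^\infty$.

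Step 1 is to pass to the limit $\epsilon\to0$ in this identity. We have $\theta^\epsilon\to\theta$ strongly in $L^2_{loc}$, and the flux term tends to $0$ in $L^1$ because $\delta\theta(\xi;\cdot)\to0$ in $L^2$ as $|\xi|\to0$ (continuity of translations) while $v$ and $\theta$ are bounded. Hence $D_\epsilon$ converges in distributions to $D:=\partial_t\theta^2+\operatorname{div}(v\theta^2)$, up to harmless factors of $2$. Step 2 uses the renormalization property with $\beta(s)=s^2$: $\theta^2$ is a weak solution of \eqref{e:transport}, so $D=0$. Therefore $D_\epsilon\to0$ in $\mathcal D'$, and this holds for every radial $\rho$.

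Step 3 converts $D_\epsilon$ into the spherical average. Take $\rho$ radial, $\rho(\xi)=\eta(|\xi|)$. Then $\nabla\rho_\epsilon(\xi)=\epsilon^{-d-1}\eta'(|\xi|/\epsilon)\,\xi/|\xi|$, and polar coordinates $\xi=r\omega$ give
\[
D_\epsilon=\frac12\int_0^\infty \eta'(s)\,s^{d-1}\,\frac{S(\theta,v,\epsilon s)}{\epsilon}\,ds .
\]
So for every admissible profile $\eta$, the average of $S(\theta,v,\epsilon s)/(\epsilon s)$ against the weight $s^{d}\eta'(s)$ tends to $0$ in $\mathcal D'$. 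To obtain the statement for a single radius $\epsilon$, one would like $\eta'(s)s^d$ to approximate a Dirac mass at $s=1$, and so deduce that $S(\theta,v,\epsilon)/\epsilon\to0$. This is a deconvolution (Mellin-type) problem.

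This last step is the main obstacle. Averaged convergence does not automatically give pointwise-in-$\epsilon$ convergence. I would handle it as Duchon--Robert and Eyink do, by a direct estimate. The quantity $S(\theta,v,\epsilon)/\epsilon$, tested against $\varphi$, can itself be written as $\int\nabla\psi_\epsilon\cdot\delta v\,|\delta\theta|^2$ with $\psi$ the normalized uniform measure on the unit sphere. This corresponds to $\eta$ being an indicator function, i.e.\ a non-smooth but admissible profile. Steps 1--2 use only $L^1$ in $\xi$ and the translation continuity of $\theta$, so they should go through for $\rho=\mathbf 1_{B_1}/|B_1|$, whose gradient is the surface measure. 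I would try to justify this by approximating $\mathbf 1_{B_1}$ by smooth radial profiles, controlling the error uniformly in $\epsilon$ via the $L^\infty$ bounds on $v$ and $\theta$ and the $L^2$-modulus of continuity of $\theta$. That yields exactly $S(\theta,v,\epsilon)/\epsilon\to0$ in distributions.
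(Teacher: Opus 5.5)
Your final route is essentially the paper's. You take the normalized ball indicator $\omega_d^{-1}\epsilon^{-d}\mathbf{1}_{B_\epsilon}$ as kernel, whose gradient (the inward normal surface measure) turns the Duchon--Robert defect exactly into a multiple of $S(\theta,v,\epsilon)/\epsilon$; you obtain this kernel as a limit of smooth radial profiles; and you send $\epsilon\to0$ using renormalization with $\beta(s)=s^2$.

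The one correction concerns the order of limits. You do not need control uniform in $\epsilon$, and for the defect term you could not get it from the $L^\infty$ bounds and the $L^2$-modulus of $\theta$ alone: the $1/\epsilon$ prefactor makes such control the same deconvolution problem you flagged. As in the paper, fix $\epsilon$ and let the profiles converge to the indicator. This gives the exact balance identity \eqref{mollifiedenergybalance} at scale $\epsilon$, because the left side depends continuously on the kernel in $L^1$ and $r\mapsto\iint\chi\,S(\theta,v,\epsilon r)\,dx\,dt$ is continuous by continuity of translations. Only afterwards let $\epsilon\to0$.
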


\begin{proof}
Following \cite{Novack2024,BoutrosTiti2025}, we introduce a $C^\infty$ nonincreasing function $\psi : \mathbb{R} \rightarrow \mathbb{R}$ which we require to satisfy the following two conditions
\begin{equation}
\psi (s) = \begin{cases}
1 \quad \text{if } s \leq \frac{3}{4}, \\
0 \quad \text{if } s \geq \frac{5}{4}.
\end{cases} 
\end{equation}
Note that $\psi$ can be thought of as an approximation of the Heaviside function and for the rest of the argument it will not matter how it is defined in the interval $(\frac{3}{4},\frac{5}{4})$.
Then we define $\tilde{\psi}_{\eta,\epsilon} : \mathbb{R}^d \rightarrow \mathbb{R}$ be a smooth radial $C^\infty$ mollifier as follows
\begin{equation*}
\tilde{\psi}_{\eta,\epsilon} (x) \coloneqq \frac{1}{\epsilon^d} \psi \left( 1 + \eta^{-1} \bigg( \frac{\lvert x \rvert}{\epsilon} - 1 \bigg) \right),
\end{equation*}
after which we define
\begin{equation}
\psi_{\eta,\epsilon} (x) \coloneqq \frac{1}{\lVert \tilde{\psi}_{\eta,\epsilon} \rVert_{L^1}} \tilde{\psi}_{\eta,\epsilon} (x).
\end{equation}
Moreover, for any function $f$ we will use the notation $f^{\eta,\epsilon} \coloneqq f * \psi_{\eta,\epsilon}$. Mollifying the transport equation \eqref{e:transport} gives
\begin{equation} \label{mollifiedtransporteq}
\partial_t \theta^{\eta,\epsilon} + \nabla \cdot (v \theta)^{\eta,\epsilon} = 0.
\end{equation}
Taking $\theta^{\eta,\epsilon} \chi$ as a test function (for $\chi \in \mathcal{D} (\mathbb{T}^d \times (0,T))$) in the weak formulation \eqref{e:distributional}, and subtracting equation \eqref{mollifiedtransporteq} multiplied by $\theta \chi$ and integrated over $\mathbb{T}^d \times [0,T]$, gives
\begin{align*}
0 &= \int_0^\infty \int_{\mathbb T^d} \bigg[ \theta \big[\partial_t (\theta^{\eta,\epsilon} \chi) + v\cdot \nabla (\theta^{\eta,\epsilon} \chi) \big] - \theta \chi \partial_t \theta^{\eta,\epsilon} - \theta \chi \nabla \cdot (v \theta)^{\eta,\epsilon} \bigg] \, dx\, dt \\
&= \int_0^\infty \int_{\mathbb T^d} \bigg[ \theta \theta^{\eta,\epsilon} \partial_t \chi + \theta \theta^{\eta,\epsilon} v \cdot \nabla \chi + \chi \theta v \cdot \nabla \theta^{\eta,\epsilon} - \theta \chi \nabla \cdot (v \theta)^{\eta,\epsilon} \bigg] \, dx \, dt.
\end{align*}
Then we introduce the following mollified defect term
\begin{align*}
D_{\eta,\epsilon} (\theta,v) &\coloneqq \int_{\mathbb{R}^3} \nabla \psi_{\eta, \epsilon} (\xi) \cdot \delta v (\xi;x,t) \lvert \delta \theta (\xi; x,t) \rvert^2 \, d \xi \\
&= - \nabla \cdot (\lvert \theta \rvert^2 v)^{\eta,\epsilon} + v \cdot \nabla (\lvert \theta \rvert^2)^{\eta,\epsilon} + 2 \theta \nabla \cdot (\theta v)^{\eta,\epsilon} - 2 \theta v \cdot \nabla \theta^{\eta,\epsilon}.
\end{align*}
Therefore we end up with the following equation
\begin{align}
&0 = \int_0^\infty \int_{\mathbb T^d} \bigg[ \theta \theta^{\eta,\epsilon} \partial_t \chi + \theta \theta^{\eta,\epsilon} v \cdot \nabla \chi - \tfrac{1}{2} D_{\eta, \epsilon} (\theta,v) \chi - \tfrac{1}{2} \chi \nabla \cdot (\lvert \theta \rvert^2 v)^{\eta,\epsilon} + \tfrac{1}{2} \chi v \cdot \nabla (\lvert \theta \rvert^2)^{\eta,\epsilon} \bigg] \, dx \, dt \nonumber \\
&= \int_0^\infty \int_{\mathbb T^d} \bigg[ \theta \theta^{\eta,\epsilon} \partial_t \chi + \theta \theta^{\eta,\epsilon} v \cdot \nabla \chi - \tfrac{1}{2} D_{\eta, \epsilon} (\theta,v) \chi + \tfrac{1}{2} \big[(\lvert \theta \rvert^2 v)^{\eta,\epsilon} - (\lvert \theta \rvert^2)^{\eta,\epsilon} v \big] \cdot \nabla \chi  \bigg] \, dx \, dt. \label{mollifieddefecttermeq}
\end{align}
Now we first make the change of variables $\xi' = \frac{\xi}{\epsilon}$ in the formula for the mollified defect term (and relabel $\xi'$ by $\xi$)
\begin{equation}
D_{\eta,\epsilon} (\theta,v) = \frac{1}{\epsilon} \int_{\mathbb{R}^3} \nabla \psi_{\eta, 1} (\xi) \cdot \delta v (\epsilon \xi;x,t) \lvert \delta \theta (\epsilon \xi; x,t) \rvert^2 \, d \xi.
\end{equation}
By using spherical coordinates (using the radial symmetry of the mollifier), we make the change of variable $\xi = r \xi'$ (where we take $\lvert \xi' \rvert = 1$ and we again relabel $\xi'$ by $\xi$) and deduce that
\begin{align}
D_{\eta,\epsilon} (\theta,v) &= \frac{1}{\epsilon \eta} \frac{1}{\lVert \tilde{\psi}_{\eta,1} \rVert_{L^1}} \int_0^\infty \psi' \left( 1 + \eta^{-1} \big( r - 1 \big) \right) r^2 \int_{\mathbb{S}^{d-1}} \xi \cdot \delta v (\epsilon r \xi;x,t) \lvert \delta \theta (\epsilon r \xi; x,t) \rvert^2 \, d \xi \, dr \nonumber \\
&= \frac{1}{\lVert \tilde{\psi}_{\eta,1} \rVert_{L^1}} \int_0^\infty \psi' \left( 1 + \eta^{-1} \big( r - 1 \big) \right) \frac{r^3}{\epsilon \eta} \frac{S(\theta,v, \epsilon r)}{r} \, dr. \label{defecttermspherical}
\end{align}
We next show that as $\eta \rightarrow 0$, the mollified defect term converges to $- \frac{3}{4 \pi \epsilon} S(\theta,v, \epsilon)$ in the sense of distributions. To that end, we introduce another smooth radial nonnegative $C^\infty$ mollifier $\rho : \mathbb{R}^3 \rightarrow \mathbb{R}$ which we rescale as before by $\rho_\gamma (x) = \gamma^{-d} \rho (x/\gamma)$ and denote by $\theta^\delta$ the mollification of $\theta$ with $\rho_\delta$. Subtracting the term $- \frac{3}{4 \pi \epsilon} S(\theta,v, \epsilon)$ from identity \eqref{defecttermspherical}, multiplying by a test function $\chi$ and integrating in space gives
\begin{align}
\nonumber&\bigg\lvert \int_{\mathbb{T}^d} \big[ D_{\eta,\epsilon} (\theta,v) + \frac{3}{4 \pi \epsilon} S(\theta,v, \epsilon) \big] \chi \, dx \bigg\rvert \\
\nonumber&\quad = \bigg\lvert \frac{1}{\lVert \tilde{\psi}_{\eta,1} \rVert_{L^1}} \int_{\mathbb{T}^d} \chi \int_0^\infty \psi' \left( 1 + \eta^{-1} \big( r - 1 \big) \right) \frac{r^3}{\epsilon \eta} \bigg[ \frac{S(\theta,v, \epsilon r)}{r} - S(\theta,v, \epsilon) \bigg] \, dr \, dx \bigg\rvert \\
\nonumber&\quad\leq \bigg\lvert \frac{1}{\lVert \tilde{\psi}_{\eta,1} \rVert_{L^1}} \int_{\mathbb{T}^d} \chi \int_0^\infty \psi' \left( 1 + \eta^{-1} \big( r - 1 \big) \right) \frac{r^3}{\epsilon \eta} \bigg[ \frac{S(\theta^\delta,v, \epsilon r)}{r} - S(\theta^\delta,v, \epsilon) \bigg] \, dr \, dx \bigg\rvert \\
\nonumber&\quad+ \bigg\lvert \frac{1}{\lVert \tilde{\psi}_{\eta,1} \rVert_{L^1}} \int_{\mathbb{T}^d} \chi \int_0^\infty \psi' \left( 1 + \eta^{-1} \big( r - 1 \big) \right) \frac{r^3}{\epsilon \eta} \bigg[ \frac{S(\theta,v, \epsilon r)}{r} - \frac{S(\theta^\delta,v, \epsilon r)}{r} \bigg] \, dr \, dx \bigg\rvert \\
&\quad+ \bigg\lvert \frac{1}{\lVert \tilde{\psi}_{\eta,1} \rVert_{L^1}} \int_{\mathbb{T}^d} \chi \int_0^\infty \psi' \left( 1 + \eta^{-1} \big( r - 1 \big) \right) \frac{r^3}{\epsilon \eta} \bigg[ S(\theta^\delta,v, \epsilon) - S(\theta,v, \epsilon) \bigg] \, dr \, dx \bigg\rvert \,.
\end{align}
We next fix $\epsilon > 0$ and any $\gamma>0$. One can bound the terms on the fourth and fifth lines by $\lVert \theta^\delta - \theta \rVert_{L^2}$ so these terms can be made smaller than $\frac{\gamma}{3}$ by choosing $\delta$ suitable small. We fix one such $\delta$ and note that $\theta^\delta$ is continuous, so the term on the third line can be made small by choosing $\eta$ suitably small. This gives that, for $\eta$ sufficiently small, 
\[
\bigg\lvert \int_{\mathbb{T}^d} \big[ D_{\eta,\epsilon} (\theta,v) + \frac{3}{4 \pi \epsilon} S(\theta,v, \epsilon) \big] \chi \, dx \bigg\rvert < \gamma\, .
\]
Since $\gamma$ and the test function $\chi$ are arbitrary, we have shown that, for every fixed $\epsilon$, $D_{\eta,\epsilon} (\theta, v)$ converges to $-\frac{3}{4 \pi \epsilon} S(\theta,v, \epsilon)$ in the sense of distributions as $\eta\to 0$. 
Next we observe that $\tilde{\psi}_{\eta,\epsilon} \xrightarrow[]{\eta \rightarrow 0} \epsilon^{-d} \mathbf{1}_{B_\epsilon (0)}$ in $L^p (\mathbb{R}^d)$ for any $1 \leq p < \infty$. In particular $\psi_{\eta, \epsilon}  \xrightarrow[]{\eta \rightarrow 0} \psi_{0, \epsilon} := \omega_d^{-1} \epsilon^{-d} \mathbf{1}_{B_\epsilon (0)}$, where $\omega_d$ denotes the $d$-dimensional volume of the unit ball. We will extend the notation $f^{0,\epsilon}$ to mollifications of a function $f$ with $\psi^{0,\epsilon}$. Taking the limit $\eta \rightarrow 0$ in equation \eqref{mollifieddefecttermeq} (and using the distributional convergence of $D_{\eta,\epsilon}$ to $- \frac{3}{4 \pi \epsilon} S(\theta, v, \epsilon)$ shown above), we have
\begin{align}
&\int_0^\infty \int_{\mathbb T^d} \bigg[ \theta \theta^{0,\epsilon} \partial_t \chi + \theta \theta^{0,\epsilon} v \cdot \nabla \chi + \frac{1}{2} \big[(\lvert \theta \rvert^2 v)^{0,\epsilon} - (\lvert \theta \rvert^2)^{0,\epsilon} v \big] \cdot \nabla \chi  \bigg] \, dx \, dt \nonumber \\
&\qquad = - \int_0^\infty \int_{\mathbb T^d} \frac{3}{8 \pi \epsilon} S(\theta,v, \epsilon) \chi \, dx \, dt \, . \label{mollifiedenergybalance}
\end{align}
Now we observe that $(\lvert \theta \rvert^2 v)^{0,\epsilon} - (\lvert \theta \rvert^2)^{0,\epsilon} v \rightarrow 0$ in $L^1 (\mathbb{T}^d)$ as $\epsilon \rightarrow 0$. We notice that the left-hand side of this equation converges to
\begin{equation*}
\int_0^\infty \int_{\mathbb T^d} \bigg[ \lvert \theta \rvert^2 \partial_t \chi + \lvert \theta \rvert^2 v \cdot \nabla \chi \bigg] \, dx \, dt \,.
\end{equation*}
Since the vector field $v$ has the renormalization property, the above expression is equal to zero. From this equality we then deduce the following distributional limit by sending $\epsilon \rightarrow 0$ in equation \eqref{mollifiedenergybalance}
\begin{equation}
\frac{S(\theta,v, \epsilon)}{\epsilon} \xrightarrow[]{\epsilon \rightarrow 0} 0,
\end{equation}
which is what we had to show.
\end{proof}

\section{Comments on the random fields}

First of all, we observe that assumption $\mathbb P (\{v: v(x)=0\}) = 0$ in Theorem \ref{t:no-anomaly-2} and the corresponding assumption (b) in Theorems \ref{t:no-anomaly-3} and \ref{t:no-anomaly-general} certainly follow if the probability distribution of the ($\mathbb R^2$-valued) random variable $v(x)$, resp. of the ($\mathbb R^{d\times (d-1)}$-valued) random variable $D\phi (x)$, has bounded density. This is an extremely mild assumption, which is satisfied in a number of standard choices of $\mathbb P$. Consider for instance the usual construction of Gaussian random vectors on $\mathbb T^2$. More precisely let 
\[
v (x) = \sum_{ k \in \mathbb{Z}^2 \backslash \{ 0 \} } c_k v_k (x)\, ,
\]
where $\{v_k\}_{k \in \mathbb{Z}^2 \backslash \{ 0 \} }$ is an orthonormal basis of the Hilbert space
\[
H =\left\{v\in L^2 (\mathbb{T}^2): {\rm div}\, v=0 \quad \mbox{and} \quad \int_{\mathbb{T}^2} v \, dx  = 0\right\}\, .
\]
Assume the $c_k$ are independent normally distributed random coefficients with means $\mu_k$ and variances $\sigma_k>0$. Then, given any $x\in \mathbb T^2$, $v(x)$ is a normally distributed random variable with mean $\sum_{k \in \mathbb{Z}^2 \backslash \{ 0 \} } \mu_k v_k (x)$ and covariance matrix
\[
C (x) = \sum_{k \in \mathbb{Z}^2 \backslash \{ 0 \} } \sigma_k v_k (x) \otimes v_k (x)\, .
\]
It then suffices to have the lower bound $C(x) \geq C_0 {\rm Id} > 0$ in the sense of positive definite matrices to ensure that the density of the probability distribution of $v(x)$ is bounded. If we take classical bases such as Fourier or (several bases of) wavelets, the uniform lower bound is implied by the positivity of the $\sigma_k$'s. In fact the positivity of them for a handful of low frequencies is already enough. 

Of course the nondegeneracy assumption in Theorem \ref{t:no-anomaly-2} is much weaker than requiring bounded density of the pointwise distribution: it just suffices that the latter has no atom at the origin. Likewise in Theorem \ref{t:no-anomaly-3} and Theorem \ref{t:no-anomaly-general} the condition (b) can be considerably weakened. For instance it is clear from the proof that it is only needed for $M_0$'s which have rank at most $d-3$.

\medskip

Observe that the random velocity $v$ in Theorem \ref{t:no-anomaly-general} has average zero. However generalizing the statement so to include nonconstant averages is rather straightforward. We just need to make $\mathbb P$ a probability distribution on the space of $C^1$ maps $\phi: \mathbb R^d \to \mathbb R^{d-1}$ which are the sums of a $1$-periodic function and a linear map. The differential of each component $\phi_i$ is then a periodic $1$-form and induces therefore a closed $1$-form $\omega_i$ on $\mathbb T^d$. In particular the conclusion of Theorem \ref{t:no-anomaly-general} will then hold for the divergence-free vector field $w$ (which needs not be mean-free) such that $\star \sum_i w_i dx_i = \omega_1 \wedge \ldots \wedge \omega_{d-1}$. 

\bibliographystyle{acm}
\bibliography{main.bib}

\end{document}